\numberwithin{equation}{section}
\numberwithin{figure}{section}
\newtheorem{claim}{\bf \t}[part]
\newtheorem{theorem}{Theorem}[section]
\newtheorem{corollary}[theorem]{Corollary}
\newtheorem{remark}{Remark}[section]
\newtheorem{definition}{Definition}[section]
\newtheorem{thm}{Theorem}[section]
\newtheorem{lem}[thm]{Lemma}
\newcommand{\set}[1]{\left\{#1\right\}}
\newcommand{\Real}{\mathbb R}
\newcommand{\abs}[1]{\left\vert#1\right\vert}
\newcommand{\qnt}[1]{\left(#1\right)}
\newcommand{\dif}{\mathrm{d}}
\title{On the one-dimensional piston model with Large Velocity Variations}
\author{Dian Hu}
\address{Dian Hu,
School of Sciences, East China University of Science and Technology, Shanghai, 200237, P.R. China}
\email{hudianaug@qq.com}
\author{Qianfeng Li}
\address{Qianfeng Li,
Department of Mathematics,
Friedrich-Alexander-Universität Erlangen-Nürnberg, Cauerstr. 11, 91058, Germany }
\email{qianfeng.li@fau.de}
\author{Yongqian Zhang}
\address{Yongqian Zhang, School of mathematical sciences, Fudan University, Shanghai 200433, P.R. China}
\email{yongqianz@fudan.edu.cn}
\begin{document}
\begin{abstract}
This paper investigates the dynamics of a one-dimensional piston expanding into a static rarefied gas. Using asymptotic analysis in the limit of vanishing initial density, we derive sharp estimates for the piston–shock distance, the separation of characteristic speeds, and the reflection coefficient associated with characteristic waves interacting with the leading shock front. Based on these estimates, we apply the method of characteristics to prove the global-in-time existence of piecewise smooth solutions. The resulting flow structure exhibits significant velocity variations. The analysis reveals a stable mechanism that operates in the vanishing-density limit of the piston model.

% This paper studies the dynamics of a one-dimensional piston expanding into static rarefied gas. We observe that in piston models, as the initial density goes to zero, the piston approaches the shock front; however, in the region between the piston and the shock front, the strict hyperbolicity of the system, as well as the dissipation associated to shock front, degenerate. By applying asymptotic analysis, we provide quantitative characterizations of the distance between the piston and the shock front, the degeneration of strict hyperbolicity, and the dissipation coefficient at shock front. Consequently, by designing appropriate a priori assumptions to balance these pros and cons, we clarify that for sufficiently small initial density, the variations of the flow states and their derivatives along characteristics, as well as the variations produced by the piston, can be efficiently dissipated by the shock front. Based on such mechanism, we employ characteristic decomposition to establish the global-in-time existence of the piecewise smooth solution for this piston problem.  In particular, the resulting flow structure exhibits significant velocity variations.  
\end{abstract}

\keywords{piston problem, shock wave, global piecewise smooth solution, characteristic decomposition, asymptotic analysis.}
\maketitle
\section{Introduction}

%%%%%% Introducing Problems

In the paper, we consider the dynamics of the piston moving into a one-dimensional semi-infinite chamber initially filled with uniform and stationary isentropic gas. The piston model is governed by the following initial boundary value problem
\begin{equation}\label{eq:1dEulerSystem}
\left\{
\begin{aligned}
&\rho_t+(\rho u)_x=0, ~~t>0, x>w(t),\\
&(\rho u)_t+(\rho u^2+p)_x=0, ~~t>0, x>w(t),\\
&(\rho,u)=(\rho_\infty,0), ~~t=0,x>0,\\
& u(x,t)=w'(t), ~~t>0, x=w(t),
\end{aligned}
\right.
\end{equation} 
where $\displaystyle \mathsf{P}:=\set{(x, t): x=w(t), t>0}$ satisfying $w(0)=0$ denotes the piston trajectory; $u$ and $\rho$ denote the velocity and the density of the gas respectively; $\displaystyle p=\rho^{\gamma}$ denotes the pressure with adiabatic constant $\gamma\in(1, 3)$; and $\displaystyle \rho_\infty$ is a given positive constant. We aim to determine $\displaystyle (\rho, u)\big|_{t>0, x>w(t)}$ with prescribed $w(t)$ and $\rho_\infty.$

Our analysis is restricted to the compressive case, where the piston advances into the undisturbed gas, producing a leading shock front $\displaystyle \mathsf{S}:=\set{(x, t):x=s(t), s(0)=0, t>0}$. Ahead of the shock front $\mathsf{S},$ the gas is undisturbed, i.e. $(\rho,u)\big|_{t>0, x>s(t)}=(\rho_\infty,0)$. And on the shock front $\mathsf{S}$, there hold Rankine-Hugoniot (R-H) condition and entropy condition
\begin{equation}\label{eq:RHconditionExpression}
\left\{
\begin{aligned}
&(\rho-\rho_\infty)s'(t)=\rho u,\\
&\rho u s'(t) = (\rho u^2+\rho^\gamma - \rho^\gamma_\infty),\\
&\rho>\rho_\infty, \text{~on~} \mathsf{S}.  
\end{aligned}
\right.
\end{equation}
Here and in the sequel, 
\begin{equation*}
    (\rho, u)(s(t),t):=\lim_{x\to s(t)-}(\rho,u)(x,t), t>0. 
\end{equation*}

Thus, the piston problem with an advancing piston can be rewritten as 
\begin{equation}\label{eq:MainProblem}
    \left\{
    \begin{aligned}
        &\rho_t+(\rho u)_x=0,\\
        &(\rho u)_t+(\rho u^2+\rho^{\gamma})_x=0, (x,t)\in \Omega ,\\
        &u(x,t)=w'(t), (x,t)\in \mathsf{P},\\
        & \text{R-H \& entropy conditions \eqref{eq:RHconditionExpression}}, (x,t)\in \mathsf{S},
    \end{aligned}
    \right.
\end{equation}
where $\displaystyle \Omega :=\set{(x, t)|t\in\Real^+,~w(t)\leq x\leq s(t)}$, and we aim to determine $(\rho,u)\big|_{\Omega }$ and $s(t)\big|_{t>0}$.

%%%%%% Literature Review
 The piston model is not only a basic prototype model in gas dynamics \cite{Courant1948,lefranccois2010introduction}, but also an efficient approximation for hypersonic flow past slender bodies \cite{KuangJie2021,Tsien1946}. In a one-dimensional semi-infinite chamber initially filled with uniform and stationary gas, a receding piston produces a leading rarefaction wave, while an advancing piston with constant speed generates a leading shock wave with constant speed, propagating into the undisturbed medium \cite[Chapter III]{Courant1948}. Under sufficiently small bounded variation (BV) perturbations on the initial data and the piston velocity, employing the modified Glimm scheme and wave-front tracking methods,  \cite{amadori1997initial,Wang2005GlobalExistence, MR3582280} establish the stability of these two kinds of wave patterns in the framework of BV solution. We remark that the intrinsic stability mechanism is the cancellation between the leading rarefaction wave and weak shock waves of the same characteristic family, as well as the dissipation associated with the leading shock wave. Moreover, under  sufficiently small weighted $C^1$ perturbations, by the method of characteristics,  the stability of the shock wave pattern is obtained in  \cite{li1994global,li2007inverse,Li1991GlobalShock,wang2014inverse} in the framework of piecewise $C^1$ solution. 
 
Beyond the small perturbation framework, generalizing compensated compactness analysis on the Cauchy problem of 1-d isentropic gas dynamics \cite{ding1985convergenceI,ding1985convergenceII,diperna1983convergence}, Takeno \cite{takeno1992initial} establishes the global $L^\infty$ weak solution of the piston problem with any given $L^\infty$ initial data and $L^\infty$ piston velocity. Moreover, for one-dimensional free piston problems (driven solely by pressure differences), the global existence has been established both in the framework of BV solutions \cite{liu1978free} and in the sense of $L^\infty$ weak solutions \cite{takeno1995free}. In addition, when the piston velocity is near the speed of light, relativistic effects are considered in \cite{Dingmin2021,Dingmin2013,Lai2020EJAM,Laigeng2022,lai2023three}. Piston models have also been investigated in the framework of Radon measure solutions, particularly for limiting cases such as infinite-speed pistons or pressureless flows \cite{MR4052901,MR4887787}.

In the multidimensional setting, such as a spherically symmetric piston expanding into a still gas, the presence of geometric effects introduces additional analytical challenges. The self-similar flow structures are shown in \cite{Chen2003JDE,Taylor1946}. Local piecewise smooth solutions near the singular point $x=0$ have been constructed in \cite{Wangzejun2004,Wang2004ACTA}, and the global existence results of admissible BV solutions have been obtained in \cite{Wang2005DCDS,Wangzejun2004,Wangzejun2008Global}. We remark that except the flow structure inversely constructed in \cite{hu2025inverse}, all the mentioned results for multi-dimensional piston problems require almost constant piston velocity.

In the paper, we investigate the flow fields induced by the piston with large variation velocity. The main result is stated as follows:

% \begin{theorem}\label{thm1}
% Let $\kappa, \varrho, {w_*}, {w^*}$ be given positive constants satisfying $\displaystyle \frac{{w^*}}{{w_*}}<3$. 
% % \begin{equation}\label{eq:PistonSpeedBV}
% % \frac{{w^*}}{{w_*}}<3,
% % \end{equation} 
% There exists $\epsilon>0$ such that if the given initial data $(\rho_\infty, 0)$ and the given piston trajectory $\mathsf{P}$ jointly satisfy the following three assumptions:
% \begin{equation*}
%                \text{(A1) } {w_*}<w'(t)<{w^*},\quad 
%                \text{(A2) } 0<\rho_\infty<\epsilon,\quad 
%                \text{(A3) } \sup_{t\in \Real^+} |(1+t)w''(t)|<\kappa \rho^{\frac{\gamma-1}{\gamma}+\varrho}_\infty.
%         \end{equation*}
% Then the one-dimensional piston problem admits global piecewise smooth solution, that is, there exist $s(t)\in C^2(\Real^+)$ and $(\rho,v)\in C^1(\Omega )$ solving problem \eqref{eq:MainProblem}.
% \end{theorem}

\begin{theorem}\label{thm1}
Let $\kappa, \varrho, {w_*}, {w^*}$ be given positive constants satisfying
\begin{equation}\label{eq:WidelyVarying}
    \frac{w^*}{w_*} < 3.
\end{equation}
Then there exists $\epsilon > 0$ such that if the given initial data $(\rho_\infty, 0)$ and the piston trajectory $\mathsf{P}$ satisfy the following assumptions:
\begin{itemize}
  \item[(A1)] ${w_*} < w'(t) < {w^*}$,
  \item[(A2)] $0 < \rho_\infty < \epsilon$,
  \item[(A3)] $\displaystyle \sup_{t \in \mathbb{R}^+} |(1+t)w''(t)| < \kappa \rho_\infty^{\frac{\gamma - 1}{\gamma} + \varrho}$,
\end{itemize}
then the one-dimensional piston problem admits a global piecewise smooth solution. That is, there exist
\[
s(t) \in C^2(\mathbb{R}^+), \quad (\rho, v) \in C^1(\Omega)
\]
solving problem~\eqref{eq:MainProblem}.
\end{theorem}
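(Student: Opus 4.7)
The plan is to combine an asymptotic analysis in the small-$\rho_\infty$ regime with the classical method of characteristics, closing a bootstrap argument on weighted $C^1$ norms of the Riemann invariants. The guiding observation is that, by the Rankine--Hugoniot relations, the density behind the shock scales like $\rho_\infty^{1/\gamma}$, so the sound speed $c=\sqrt{\gamma}\,\rho^{(\gamma-1)/2}$ is of size $O(\rho_\infty^{(\gamma-1)/(2\gamma)})$. Consequently the two characteristic speeds $u\pm c$ are nearly equal and nearly tangent to both the piston and the shock; the weighted bound on $w''$ in assumption (A3) is of exactly the size needed for the derivatives of the state to remain small along them.

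First, introduce Riemann invariants $r=u+\tfrac{2}{\gamma-1}c$ and $s=u-\tfrac{2}{\gamma-1}c$, so that in $\Omega$ one has $\partial_+ r=0$, $\partial_- s=0$, where $\partial_\pm$ denote derivatives along $dx/dt=u\pm c$. A local piecewise smooth solution with $C^2$ shock and $C^1$ interior state is obtained from the standard theory of free-boundary shock problems (as used, for example, in \cite{li1994global,Wang2005GlobalExistence}); the task is then to propagate uniform estimates up to $t=\infty$.

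Second, carry out the three sharp estimates announced in the abstract. From the R--H relations one extracts the boundary identity $\rho_\infty\rho u^2=(\rho-\rho_\infty)(\rho^\gamma-\rho_\infty^\gamma)$, which gives, for the state $(\rho_b,u_b)$ just behind $\mathsf{S}$,
\begin{equation*}
\rho_b=\bigl(\rho_\infty u_b^2\bigr)^{1/\gamma}\bigl(1+O(\rho_\infty^{(\gamma-1)/\gamma})\bigr),\qquad c_b=O(\rho_\infty^{(\gamma-1)/(2\gamma)}),\qquad s'(t)-u_b=O(\rho_\infty^{(\gamma-1)/\gamma}).
\end{equation*}
These yield (i) a piston--shock distance growing like $(w^*-w_*)(1+t)$, (ii) a uniform control $c\le C\rho_\infty^{(\gamma-1)/(2\gamma)}$ on the separation of the two characteristic speeds, and (iii) a reflection coefficient $\mathcal{R}$ obtained by differentiating the R--H conditions along $\mathsf{S}$ and expressing $\partial_- r$ in terms of $\partial_+ s$ plus a source driven by $w''$ through the shock slope. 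The structural restriction $w^*/w_* < 3$ enters here: in the vanishing-density limit one checks that $|\mathcal{R}|$ is bounded by a function of $w^*/w_*$ which equals $1$ precisely at the ratio $3$, so (\ref{eq:WidelyVarying}) is exactly what guarantees $|\mathcal{R}|\le 1-\eta$ for some $\eta>0$ uniformly in $t$.

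Third, set up the bootstrap. Posit, for small exponents $\alpha,\beta>0$ to be chosen from (A3),
\begin{equation*}
\sup_{\Omega}(1+t)\bigl(|\partial_- r|+|\partial_+ s|\bigr)\le M\rho_\infty^{\alpha},\qquad w_*-N\rho_\infty^{\beta}\le u\le w^*+N\rho_\infty^{\beta}.
\end{equation*}
Propagate $\partial_- r$ and $\partial_+ s$ along their characteristics using the standard transport identities for isentropic Riemann invariants, whose coefficients involve $\partial_\pm c$ and vanish to the required order because of the smallness of $c$; at the piston the boundary condition $r+s=2w'(t)$ converts $\partial_- r$ into $\partial_+ s$ with an additive source controlled by $w''(t)$; at the shock the same conversion takes place with multiplier $\mathcal{R}$ and a source controlled by $s'' w''/c_b$. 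Iterating, the interior transport conserves the invariants to leading order, the piston adds a source bounded by (A3), and the shock contracts by the factor $|\mathcal{R}|\le 1-\eta$.

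The main obstacle is closing this bootstrap over infinite time. A single characteristic makes infinitely many round trips between piston and shock; each trip has duration of order $(s(t)-w(t))/c_b$, which diverges as $\rho_\infty\to 0$, but the number of trips in $[0,T]$ is correspondingly small. One must show that the geometric decay factor $1-\eta$ at each shock reflection, compounded with the $(1+t)^{-1}$ weight along each leg, dominates the cumulative piston source over all round trips. The excess exponent $\varrho>0$ in (A3) provides the slack needed to absorb the loss from the transport-equation coefficients and close the estimate with strict inequality, which extends the local solution globally; the compatibility of $\alpha,\beta$ with $\varrho$ and with $w^*/w_*<3$ is what determines the admissible $\epsilon$.
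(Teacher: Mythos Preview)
Your overall architecture (local solution, bootstrap on weighted derivative norms of the Riemann invariants, reflection relations at piston and shock, narrow-strip characteristic analysis) matches the paper's, but the mechanism you describe for closing the bootstrap is wrong at the key point, and this is not a cosmetic issue---it is precisely where the content of the theorem lives.

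You assert that in the vanishing-density limit the shock reflection coefficient satisfies $|\mathcal{R}|\le 1-\eta$ for some fixed $\eta>0$, with $\eta\to 0$ exactly when $w^*/w_*\to 3$. This is false. Differentiating the Rankine--Hugoniot relation along $\mathsf{S}$ and eliminating $\partial^\pm u$ via the Riemann-invariant relations gives a reflection coefficient $k_g$ depending only on the density ratio $k=\rho(s(t),t)/\rho_\infty$, and one computes
\[
1-k_g\;\sim\;\frac{6}{\sqrt{\gamma}}\,k^{-1/2}\;\sim\;\frac{6}{\sqrt{\gamma}}\,w'(t)^{-1/\gamma}\rho_\infty^{(\gamma-1)/(2\gamma)}.
\]
So $|k_g|\to 1$ as $\rho_\infty\to 0$, uniformly in $t$; the shock dissipation \emph{degenerates}, and nothing about $w^*/w_*$ prevents this. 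Related to this, two of your scaling claims are inverted: the piston--shock distance is $s(t)-w(t)=O(\rho_\infty^{(\gamma-1)/\gamma})\,t$, not $(w^*-w_*)t$, and the round-trip time between boundaries is $O(\rho_\infty^{(\gamma-1)/(2\gamma)})\,t$, which is \emph{small} (hence the number of reflections in $[0,T]$ is large, not small).

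The actual way the bootstrap closes, and where $w^*/w_*<3$ enters, is a balance of two quantities of the \emph{same} order $\rho_\infty^{(\gamma-1)/(2\gamma)}$ over a single round trip of the weighted quantity $t\,\partial^\pm c$: the shock contributes a contraction $1-\frac{6}{\sqrt{\gamma}}(w^*)^{-1/\gamma}\rho_\infty^{(\gamma-1)/(2\gamma)}$ (at worst), while tracking the weight $t$ back through one crossing contributes an expansion factor $t_0/t_2\le 1+\frac{2\,w_*^{-1/\gamma}(w^*)^{(\gamma-1)/\gamma}}{\sqrt{\gamma}\,w_*^{(\gamma-1)/\gamma}}\rho_\infty^{(\gamma-1)/(2\gamma)}+\cdots$ coming from the narrow estimate. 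The net leading coefficient of $\rho_\infty^{(\gamma-1)/(2\gamma)}$ is
\[
\frac{6}{\sqrt{\gamma}}(w^*)^{-1/\gamma}-\frac{2}{\sqrt{\gamma}}\frac{(w^*)^{(\gamma-1)/\gamma}}{w_*}\;=\;\frac{2}{\sqrt{\gamma}}(w^*)^{-1/\gamma}\Bigl(3-\frac{w^*}{w_*}\Bigr),
\]
and its positivity is exactly condition \eqref{eq:WidelyVarying}. The nonlinear transport error from the characteristic decomposition and the piston source from (A3) are of strictly higher order (this is where $\varrho>0$ is used), so the round-trip map on $t\,\partial^\pm c$ contracts by $1-\hat\nu\,\rho_\infty^{(\gamma-1)/(2\gamma)}$ for some $\hat\nu>0$. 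Your proposal needs to be rewritten around this degenerate balance; a uniform $|\mathcal{R}|\le 1-\eta$ is not available and would, if it were, make the condition $w^*/w_*<3$ irrelevant.
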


\begin{remark}
The condition $0<\rho_\infty \ll 1$ in our results corresponds to a flow with a large Mach number between the piston $\mathsf{P}$ and the shock $\mathsf{S}$. Moreover, this condition can be associated with two physical regimes through appropriate scalings:

\begin{itemize}
    \item[(1)] Under the scaling
    $$
    \tilde{\rho} = \frac{\rho}{\rho_\infty}, \quad \tilde{u} = u, \quad \tilde{t} = t, \quad \tilde{x} = x,
    $$
    the limit $\rho_\infty \to 0$ corresponds to the \emph{vanishing pressure limit}. Related results on this limit process can be found in \cite{MR4466980} and the references therein.

    \item[(2)] Under the scaling
    $$
    \tilde{\rho} = \frac{\rho}{\rho_\infty}, \quad \tilde{u} = u \, \rho_\infty^{\frac{1 - \gamma}{2}}, \quad \tilde{t} = t \, \rho_\infty^{\frac{\gamma - 1}{2}}, \quad \tilde{x} = x,
    $$
    the regime $0<\rho_\infty \ll 1$ corresponds to the situation where the piston moves into a still gas at very high speed.
\end{itemize}
\end{remark}

% \begin{remark} 
% We associate $\rho_\infty\ll 1$ in our results to the other two physical models: 
% \begin{itemize}
%     \item[ ]  (1) According to the scaling $$\tilde{\rho}=\frac{\rho}{\rho_\infty}, \tilde{u}=u, \tilde{t}=t, \tilde{x}=x,$$
%     $\rho_\infty\to 0$ in our setting corresponds to the vanishing pressure limit. Results on the limit process are exhibited in \cite{MR4466980} and reference therein.  
    
%     \item[ ]  (2) According to the scaling $$\tilde{\rho}=\frac{\rho}{\rho_\infty}, \tilde{u}=u\rho_\infty^{\frac{1-\gamma}{2}}, \tilde{t}=t\rho_\infty^{\frac{\gamma-1}{2}}, \tilde{x}=x,$$ $\rho_\infty\ll 1$ in our setting corresponds to the piston moving into still gas at such high speed.
    
% \end{itemize}
   
% \end{remark}

% \begin{remark}
%     $\rho_\infty\ll 1$ is associated to the flow between $\mathsf{P}$ and $\mathsf{S}$ exhibiting large Mach number. 
% \end{remark}    

The formula \eqref{eq:WidelyVarying} and the assumption \textup{(A1)} imply that the piston exhibits large velocity variations in our setting. Consequently, $u\big|_{\Omega }$ varies significantly. Thus, a perturbation approach assuming $(\rho,u)\big|_{\Omega }$ is near a constant state, as in \cite{Chen2003JDE}, is not applicable here. To overcome this,  we assume sufficiently small initial density, i.e., $0<\rho_\infty\ll1$, which ensures $\Omega $ is such a narrow dihedral region. Indeed, under assumptions \textup{(A1)} - \textup{(A3)}, there holds $$\displaystyle \big|s(t)-w(t)\big|=\mathcal{O}(\rho_\infty^{\frac{\gamma-1}{\gamma}})t.$$ 

A similar strategy has been employed in \cite{cui2009global,hu2018supersonic,HuZhang2019SIMA} to address the challenge induced by large variations in flow velocity, thereby enabling the construction of a global piecewise smooth solution for stationary hypersonic irrotational flow past sharp obstacles. However, in the piston problem, the inherent structure of the nonstationary isentropic Euler equations introduces two additional challenges:
\begin{itemize}
    \item  There is a loss of strict hyperbolicity of the nonstationary isentropic Euler equations in $\Omega $ as $\rho_{\infty}$ tends to $0.$ Indeed, the differences among eigenvalues, the shock speed and the piston speed are $\displaystyle\mathcal{O}(\rho_\infty^{\frac{\gamma-1}{2\gamma}}).$

    \item The strict dissipation property of leading shock wave degenerates as $\displaystyle\rho_\infty\to 0$. Specifically, the reflection coefficient associated with characteristic waves interacting with the leading shock front at $(s(t),t)\in \mathsf{S}$ is almost $ 1-\frac{6}{\sqrt{\gamma}}w'(t)^{-\frac{1}{\gamma}}\rho_\infty^{\frac{\gamma-1}{2\gamma}}.$
\end{itemize}
Consequently, it is necessary to balance the required smallness of $\rho_{\infty}$—which serves to shorten the distance between the shock wave and the piston—against the resulting degeneration of both strict hyperbolicity and the shock wave’s strict dissipation property.

% Employing asymptotic analysis to overcome the mentioned two difficulties, we apply the method of characteristics to establish the following result.  

% \begin{remark}
%     (A1) is associated to the degeneration of the strict hyperbolicity as $\rho_\infty \to 0;$ and (A3) is associated to the degeneration of the strict dissipation property of shock wave.  
% \end{remark}

Specifically, inspired by the asymptotic behavior of the flow field near $\rho_\infty=0$ with constant piston velocity, we design suitable a priori assumptions such that the variations of the flow states and their derivatives can be dissipated by the leading shock wave. Moreover, restricting the variation produced at the piston to match the dissipation at the leading shock wave, we can complete the whole a priori estimates. We remark that there hold in our obtained flow field,  $$\bigg|\rho\big|_{t=t_0,w(t_0)<x<s(t_0)}-\rho\big|_{t=t_0,x=s(t_0)}\bigg|\leq \rho_\infty^{\frac{\gamma+1}{2\gamma}}, $$ and $$\bigg|u\big|_{t=t_0,w(t_0)<x<s(t_0)}-w'(t_0)\bigg|\leq (w'(t)^{\frac{\gamma-2}{\gamma}}+\delta_1)\rho_\infty^{\frac{\gamma-1}{\gamma}},$$ for some positive constant $\delta_1$ determined in Lemma \ref{lem:Apriori}. 
% We also remark that in our problem, the analysis on same-order infinitesimals are not sufficient, but on equivalent infinitesimals.

The remaining part is organized as follow. In Section 2, we establish the asymptotic behavior of the flow states near $\rho_\infty=0$ in the case piston moving with constant speed. In Section 3, we establish the characteristic decomposition, and figure out reflection relations at the piston and at the shock wave. In Section 4, based on the characteristic forms and the dissipation property of the shock wave, by conducting analysis along characteristics, we complete the a priori estimates. Moreover, as a corollary of the a priori estimates, we prove the main Theorem \ref{thm1}.

\section{Piston moving with constant speed}
We consider a special case in which the piston expands into still gas $(\rho_\infty, 0)$ with constant speed $w_0$.  According to \cite[Chapter III]{Courant1948}, the resulting flow field consists of piecewise constant states separated by a straight shock moving at constant speed $s_0$. The flow field ahead of the shock is $(\rho_\infty, 0)$ and the flow field behind the shock is $(\rho_0, u_0).$ Here constant speed $s_0$ and the state $(\rho_0, u_0)$ are determined by Rankine-Hugoniot condition and entropy condition. That is, in this case, the problem reduces to finding $s_0$ and $(\rho_0,u_0)$ from the following 
\begin{equation}\label{eq:constantRHC}
\begin{cases}
(\rho_0-\rho_\infty)s_0=\rho_0u_0,\\
\rho_0 u_0 s_0=\rho_0 u_0^2+\rho^\gamma_0-\rho^\gamma_\infty,\\
\rho_0>\rho_\infty.
\end{cases}
\end{equation}
It is obvious that the solid wall boundary condition at piston leads to the following 
\begin{equation}\label{eq:constantBC}
u_0=w_0.
\end{equation}

Before proceeding further, we define the equivalent infinitesimals as follows. 

\begin{definition}\label{def:equilinfinitesmall}
     Let $\mathcal{F}, \mathcal{G}$ denote positive quantities associated with $(x,t)$ and $\rho_\infty.$ We say $\mathcal{F}, \mathcal{G}$ are equivalent infinitesimals, denoted as $\mathcal{F}\sim\mathcal{G}$, if and only if $$\lim_{\rho_\infty\to0}\mathcal{F}=\lim_{\rho_\infty\to0}\mathcal{G}=0, ~ \lim_{\rho_\infty\to0+}\mathcal{F}/\mathcal{G}=1,$$ 
    and the limits are uniform with $(x,t)\in\mathbb{R}^+\times\mathbb{R}^+$.
\end{definition}

We next study the solvability of \eqref{eq:constantRHC}-\eqref{eq:constantBC}, and figure out the asymptotic behavior of $(\rho_0, u_0)$ and $s_0$ as $\rho_\infty$ approaches $0$.

\begin{thm}\label{thm:ConstantCase}
For given positive constants $\rho_\infty$ and $w_0$, there are unique $(\rho_0, u_0)$ and $s_0$ solving \eqref{eq:constantRHC}-\eqref{eq:constantBC}. Furthermore, it holds that  
\begin{equation}
    \rho_0\sim w_0^{\frac{2}{\gamma}}\rho^{\frac{1}{\gamma}}_\infty, ~u_0=w_0, ~s_0-w_0\sim w_0^{\frac{\gamma-2}{\gamma}}\rho^{\frac{\gamma-1}{\gamma}}_\infty.
\end{equation}
% \begin{equation*}
% \left\{
% \begin{aligned}
%     &\rho_0\sim w_0^{\frac{2}{\gamma}}\rho^{\frac{1}{\gamma}}_\infty,\\
% &u_0=w_0,\\
% &s_0-w_0\sim w_0^{\frac{\gamma-2}{\gamma}}\rho^{\frac{\gamma-1}{\gamma}}_\infty.
% \end{aligned}
% \right.
% \end{equation*}
% Here and in the sequel, we say $F\sim G$ if and only if 
% \begin{equation*}
% \lim_{\rho_\infty\to0_+}\frac{F}{G}=1.
% \end{equation*}
% Further, if $F$ and $G$ are functions of $(x, t)$, the former limit is uniform for $(x, t)$.
\end{thm}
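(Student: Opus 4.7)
The plan is to reduce the coupled system \eqref{eq:constantRHC}--\eqref{eq:constantBC} to a single scalar equation for $\rho_0$, then establish existence, uniqueness, and the claimed asymptotics in succession.

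First, I would use the boundary condition $u_0=w_0$ together with the first Rankine--Hugoniot relation to express
\[
s_0=\frac{\rho_0 w_0}{\rho_0-\rho_\infty},
\]
valid since admissibility requires $\rho_0>\rho_\infty$. Substituting this into the second R--H relation and simplifying (expanding $(\rho_0-\rho_\infty)(\rho_0 w_0^2+\rho_0^\gamma-\rho_\infty^\gamma)$ and cancelling the $\rho_0^2 w_0^2$ term) collapses the system to the single identity
\[
\rho_\infty\,\rho_0\, w_0^2\;=\;(\rho_0-\rho_\infty)\bigl(\rho_0^\gamma-\rho_\infty^\gamma\bigr),
\]
which is the central algebraic equation governing $\rho_0$.

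For existence and uniqueness, I would define
\[
F(\rho_0)\;=\;\frac{(\rho_0-\rho_\infty)(\rho_0^\gamma-\rho_\infty^\gamma)}{\rho_\infty\,\rho_0}
\]
on $\rho_0>\rho_\infty$, observe $F(\rho_\infty^+)=0$ and $F(+\infty)=+\infty$, and verify $F'>0$ by a direct computation (the numerator derivative dominates since each factor of $F$ is increasing while dividing by $\rho_0$ only slows, but does not reverse, growth for large $\rho_0$, and near $\rho_\infty$ one factor is near zero while the other is positive and increasing). Strict monotonicity then gives a unique $\rho_0$ with $F(\rho_0)=w_0^2$.

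For the asymptotics, I would introduce the ansatz $\rho_0=w_0^{2/\gamma}\rho_\infty^{1/\gamma}y$, anticipating $y\to1$ as $\rho_\infty\to0$. Inserting into the scalar equation and dividing by the leading order yields
\[
1\;=\;y^{\gamma-1}\bigl(1-w_0^{-2/\gamma}\rho_\infty^{1-1/\gamma}y^{-1}\bigr)\bigl(1-w_0^{-2}\rho_\infty^{\gamma-1}y^{-\gamma}\bigr),
\]
whose two parenthetical factors tend to $1$ uniformly on any compact range of $w_0$; monotonicity of $y\mapsto y^{\gamma-1}$ and an implicit--function/bootstrap argument then force $y\to1$, giving $\rho_0\sim w_0^{2/\gamma}\rho_\infty^{1/\gamma}$. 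Plugging this back into
\[
s_0-w_0\;=\;\frac{w_0\,\rho_\infty}{\rho_0-\rho_\infty}
\]
(obtained directly from the expression for $s_0$) yields $s_0-w_0\sim w_0^{(\gamma-2)/\gamma}\rho_\infty^{(\gamma-1)/\gamma}$. Since none of the quantities depend on $(x,t)$, uniformity in Definition~\ref{def:equilinfinitesmall} is automatic.

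The only mildly delicate step is the bootstrap in the asymptotic stage: one must check that the correction factors really are of lower order than the leading $y^{\gamma-1}$ balance, i.e.\ that $\rho_\infty^{1-1/\gamma}$ and $\rho_\infty^{\gamma-1}$ both vanish (requiring $\gamma>1$, which is assumed). Everything else reduces to bookkeeping with the single scalar equation above.
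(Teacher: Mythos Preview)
Your proposal is correct and follows essentially the same route as the paper: the paper also eliminates $s_0$ and $u_0$ to reach the single scalar equation, introduces the ratio $\tau=\rho_0/\rho_\infty$ so that the equation becomes $w_0^2\rho_\infty^{1-\gamma}=h(\tau):=(\tau-1)(\tau^\gamma-1)/\tau$, invokes strict monotonicity of $h$ for existence/uniqueness, and reads off $\tau\sim w_0^{2/\gamma}\rho_\infty^{(1-\gamma)/\gamma}$ from $h(\tau)\sim\tau^\gamma$ as $\tau\to\infty$---your ansatz $\rho_0=w_0^{2/\gamma}\rho_\infty^{1/\gamma}y$ is just this same step rephrased. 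One small slip: after your substitution the leading power should be $y^{\gamma}$ rather than $y^{\gamma-1}$, but this does not affect the conclusion $y\to1$.
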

\begin{proof}
It follows from \eqref{eq:constantRHC}-\eqref{eq:constantBC} directly that 
\begin{equation}\label{eq:RHconditionC}
\begin{cases}
w_0=\sqrt{\frac{(\rho_0-\rho_\infty)(\rho_0^{\gamma}-\rho^{\gamma}_\infty)}{\rho_\infty\rho_0}},\\
w_0=\frac{(\rho_0-\rho_\infty)}{\rho_0}s_0,\\
\rho_0>\rho_\infty.
\end{cases}
\end{equation}
Setting $\displaystyle\tau={\rho_0}/{\rho_\infty},$ we rewrite \eqref{eq:RHconditionC} into 
\begin{equation}\label{eq:ReformulationofE1}
\begin{cases}
w^2_0\rho^{1-\gamma}_\infty=\frac{(\tau-1)(\tau^{\gamma}-1)}{\tau}:=h(\tau),\\
s_0=\frac{\tau}{\tau-1}w_0,\\
\tau>1.
\end{cases}
\end{equation}
Noting  that $h(\tau)$ is smooth and strictly increasing for $\tau>1$, and that 
\begin{equation}\label{eq:limitoff}
\lim_{\tau\to 1}h(\tau)=0, \lim_{\tau\to+\infty}h(\tau)=+\infty,
\end{equation}
we conclude that for given positive constants $w_0$ and $\rho_\infty,$ there exists unique $\tau$ solving the first equation of \eqref{eq:ReformulationofE1}. That is, \eqref{eq:RHconditionC} admits unique $\rho_0$. Then, $s_0$ is uniquely given by the second equation of \eqref{eq:ReformulationofE1}.   

% We have obtained $(\rho_0, u_0)$ and $s_0$ for the piecewise constant flow structure. We next figure out the asymptotic behavior of $(\rho_0, u_0)$ and $s_0$ as $\rho_\infty\to0$.

Since $h(\tau)$ is monotonically increasing for  $\tau>1$, it follows from the first equation of $\eqref{eq:ReformulationofE1}$ that 
\begin{equation*}
\lim_{\rho_\infty\to0+}\tau=+\infty,
\end{equation*}
which further together with \eqref{eq:ReformulationofE1} implies that 
\begin{equation}\label{eq:Asympoft}
\tau\sim w_0^{\frac{2}{\gamma}}\rho_\infty^{\frac{1-\gamma}{\gamma}}. 
\end{equation}

Noting that $\displaystyle\tau={\rho_0}/{\rho_\infty}$, a direct computation shows that 
\begin{equation*}
\rho_0\sim w_0^{\frac{2}{\gamma}}\rho_\infty^{\frac{1}{\gamma}}, ~s_0-w_0\sim w_0^{\frac{\gamma-2}{\gamma}}\rho_\infty^{\frac{\gamma-1}{\gamma}}.
\end{equation*}
The proof is complete.
\end{proof}

As a consequence of Theorem \ref{thm:ConstantCase}, we have the following corollary.

\begin{corollary}
For $(\rho, u)=(\rho_0, u_0)$ and $s(t)=s_0t$, it holds that 
\begin{equation}
\lambda_+(s(t),t)-s_0\sim s_0-\lambda_-(s(t),t)\sim c(s(t),t)\sim \sqrt{\gamma}w_0^{\frac{\gamma-1}{\gamma}}\rho_\infty^{\frac{\gamma-1}{2\gamma}}.
\end{equation}
where the sound speed $c$ and the eigenvalues $\lambda_\pm$ are defined in \eqref{eq:Defofc} and \eqref{eq:Characteristics}. 
\end{corollary}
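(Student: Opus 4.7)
The plan is to simply evaluate the three quantities $c$, $\lambda_+ - s_0$, and $s_0 - \lambda_-$ at the state $(\rho_0, u_0) = (\rho_0, w_0)$ given by Theorem~\ref{thm:ConstantCase}, and then use the asymptotic $\rho_0 \sim w_0^{2/\gamma}\rho_\infty^{1/\gamma}$ together with the asymptotic for $s_0 - w_0$ already proved.

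First I would recall that for the isentropic Euler system with $p=\rho^\gamma$, the sound speed is $c=\sqrt{\gamma}\rho^{(\gamma-1)/2}$ and the eigenvalues are $\lambda_\pm = u \pm c$. Substituting $\rho=\rho_0$ and $u=w_0$ at $(s(t),t)$ and then inserting $\rho_0\sim w_0^{2/\gamma}\rho_\infty^{1/\gamma}$ from Theorem~\ref{thm:ConstantCase} immediately yields
\begin{equation*}
c(s(t),t)=\sqrt{\gamma}\,\rho_0^{(\gamma-1)/2}\sim\sqrt{\gamma}\,w_0^{(\gamma-1)/\gamma}\rho_\infty^{(\gamma-1)/(2\gamma)}.
\end{equation*}
This gives the third equivalence directly.

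Next I would compare the order of $s_0-w_0$ with $c$. Since $\gamma\in(1,3)$ gives $(\gamma-1)/\gamma > (\gamma-1)/(2\gamma)$, the bound $s_0-w_0\sim w_0^{(\gamma-2)/\gamma}\rho_\infty^{(\gamma-1)/\gamma}$ from Theorem~\ref{thm:ConstantCase} is of strictly higher order in $\rho_\infty$ than $c$. Writing
\begin{equation*}
\lambda_+(s(t),t)-s_0 = c(s(t),t) - (s_0-w_0),\qquad s_0-\lambda_-(s(t),t) = c(s(t),t) + (s_0-w_0),
\end{equation*}
and factoring out $c(s(t),t)$, the corrections $(s_0-w_0)/c(s(t),t)$ tend to $0$ uniformly as $\rho_\infty\to 0^+$. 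By Definition~\ref{def:equilinfinitesmall} this is exactly the statement that $\lambda_+(s(t),t)-s_0\sim c(s(t),t)$ and $s_0-\lambda_-(s(t),t)\sim c(s(t),t)$.

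There is no real obstacle in this argument; it is essentially a direct substitution followed by an order comparison. The only point worth verifying carefully is the sign and the direction of the asymptotic relation—one must check that $s_0-w_0$ is genuinely subdominant to $c$, which relies on $\gamma>1$ and is guaranteed in the range $\gamma\in(1,3)$ fixed in the introduction.
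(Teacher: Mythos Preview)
Your argument is correct and is exactly the direct substitution the paper has in mind; the paper in fact states this corollary without proof, treating it as an immediate consequence of Theorem~\ref{thm:ConstantCase}. The only minor remark is that the inequality $(\gamma-1)/\gamma>(\gamma-1)/(2\gamma)$ holds for every $\gamma>1$, not just $\gamma\in(1,3)$, so your order comparison is even more robust than you indicate.
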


\section{characteristic decomposition and its application}

Following the discussions in \cite{LiYangZheng2011JDE,LiZhangZheng2006CMP,LiZheng2009ARMA}, we present the characteristic decomposition of the one-dimensional isentropic compressible Euler equations. This decomposition facilitates the analysis of reflection of characteristic waves at the leading shock and the piston. 

\subsection{Characteristic Decomposition}
We denote by 
$$\Omega_\mathsf{T}:=\{t<\mathsf{T}\}\cap\Omega,~~  \mathsf{T} \in \mathbb{R}^+,$$
the portion of the domain $\Omega$ restricted to the time interval $[0,\mathsf{T}).$  
Considering the smooth flow field in $\Omega_{\mathsf{T}}$ with positive density, we rewrite the first two equations in \eqref{eq:MainProblem} into the non-conservation form 
\begin{equation}\label{eq:1DEulerMatrix}
\begin{pmatrix}
\rho \\
u
\end{pmatrix}_t
+
\begin{pmatrix}
u & \rho \\
\frac{c^2}{\rho} & u
\end{pmatrix}
\begin{pmatrix}
\rho \\
u
\end{pmatrix}_x=
\begin{pmatrix}
0 \\
0
\end{pmatrix},
\end{equation}
where the sound speed $c$ is 
\begin{equation}\label{eq:Defofc}
c:=\sqrt{p'(\rho)}=\sqrt{\gamma}\rho^{\frac{\gamma-1}{2}}.
\end{equation}
Direct computation shows that system \eqref{eq:1DEulerMatrix} is hyperbolic  with eigenvalues 
\begin{equation}\label{eq:Characteristics}
\lambda_\pm := u \pm c,
\end{equation}
and corresponding left eigenvectors
\begin{equation}\label{eq:lvecter1DEulerMatrix}
l_+ = (c, \rho),~ l_- = (-c, \rho).
\end{equation}

Multiplying \eqref{eq:1DEulerMatrix} by left eigenvectors, we obtain that 
\begin{equation}\label{eq:1DEulerCharacteristicForm}
\partial^+ u=-\frac{2}{\gamma-1} \partial^+ c,~ \partial^- u=\frac{2}{\gamma-1} \partial^- c,
\end{equation}
where
\begin{equation}\label{eq:CharacteristicsDerivatives}
\partial^\pm:=\partial_t+\lambda_\pm\partial_x.
\end{equation}
We also note that for positive $\rho$, $\partial_x$ and $\partial_t$ can be expressed in terms of $\partial^\pm$ as follows:
\begin{equation}\label{eq:CharacteristicExpressionsX}
\partial_x=\frac{\partial^+-\partial^-}{2c}, ~ \partial_t=\frac{\lambda_+\partial^--\lambda_-\partial^+}{2c}.
\end{equation}

Based on the preceding notation and relations, we now present the characteristic decomposition:
\begin{thm}[Characteristic decomposition]
Suppose that the flow field in $\Omega_{\mathsf{T}}$ is smooth. Then there holds in $\Omega_{\mathsf{T}}$ that
\begin{equation}\label{eq:CharacteristicDecomposition}
\begin{cases}
\displaystyle\partial^+\partial^-c=\frac{\gamma+1}{\gamma-1}\frac{1}{2c}\qnt{\partial^+c+\partial^-c}\partial^-c,\\
\displaystyle\partial^-\partial^+c=\frac{\gamma+1}{\gamma-1}\frac{1}{2c}\qnt{\partial^+c+\partial^-c}\partial^+c.
\end{cases}
\end{equation}
\end{thm}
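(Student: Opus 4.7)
The plan is to derive a linear system for the two mixed derivatives $\partial^+\partial^- c$ and $\partial^-\partial^+ c$ by combining the first-order characteristic forms \eqref{eq:1DEulerCharacteristicForm} with the commutator $[\partial^+,\partial^-]$, and then invert the system to obtain the two claimed identities.

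First I would apply $\partial^-$ to $\partial^+ u = -\frac{2}{\gamma-1}\partial^+ c$ and $\partial^+$ to $\partial^- u = \frac{2}{\gamma-1}\partial^- c$, producing
\begin{equation*}
\partial^-\partial^+ u = -\tfrac{2}{\gamma-1}\partial^-\partial^+ c,\qquad \partial^+\partial^- u = \tfrac{2}{\gamma-1}\partial^+\partial^- c.
\end{equation*}
Next, a direct expansion of $\partial^\pm = \partial_t + \lambda_\pm\partial_x$ gives the commutator identity
\begin{equation*}
(\partial^+\partial^- - \partial^-\partial^+)f = \bigl[(\partial^+\lambda_-) - (\partial^-\lambda_+)\bigr]\partial_x f,
\end{equation*}
valid for any smooth $f$. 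Since $\lambda_\pm = u\pm c$, the first-order forms \eqref{eq:1DEulerCharacteristicForm} yield
\begin{equation*}
\partial^+\lambda_- = -\tfrac{\gamma+1}{\gamma-1}\partial^+ c,\qquad \partial^-\lambda_+ = \tfrac{\gamma+1}{\gamma-1}\partial^- c,
\end{equation*}
so the factor in brackets equals $-\tfrac{\gamma+1}{\gamma-1}(\partial^+ c + \partial^- c)$.

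Applying the commutator to $u$ and to $c$ separately, and using \eqref{eq:CharacteristicExpressionsX} together with \eqref{eq:1DEulerCharacteristicForm} to express
\begin{equation*}
\partial_x u = \frac{\partial^+ u - \partial^- u}{2c} = -\frac{\partial^+ c + \partial^- c}{(\gamma-1)c},\qquad \partial_x c = \frac{\partial^+ c - \partial^- c}{2c},
\end{equation*}
I obtain two linear relations: the sum $\partial^+\partial^- c + \partial^-\partial^+ c$ (coming from the $u$-commutator, after clearing the $\pm\frac{2}{\gamma-1}$ factors) and the difference $\partial^+\partial^- c - \partial^-\partial^+ c$ (coming directly from the $c$-commutator). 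Both expressions turn out to be proportional to $\partial^+ c + \partial^- c$ times either $\partial^+ c + \partial^- c$ or $\partial^+ c - \partial^- c$, so adding and subtracting the two relations will isolate $\partial^+\partial^- c$ and $\partial^-\partial^+ c$; the algebraic identity $(\partial^+ c+\partial^- c)^2 \pm (\partial^+ c)^2 \mp (\partial^- c)^2 = 2\partial^\mp c(\partial^+ c+\partial^- c)$ then delivers exactly the two formulas in \eqref{eq:CharacteristicDecomposition}.

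The main obstacle—though really a bookkeeping step rather than a deep one—is tracking the signs and the $(\gamma-1)$-factors through the commutator computation and the substitutions, since a single sign error would corrupt the clean symmetric form. Apart from that, the argument is purely algebraic and uses only smoothness of the flow and positivity of $c$, which are guaranteed on $\Omega_{\mathsf{T}}$ by hypothesis.
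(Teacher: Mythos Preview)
Your proposal is correct and follows essentially the same route as the paper: compute the commutator $[\partial^+,\partial^-]$ acting on $c$ and on $u$, convert all $u$-derivatives to $c$-derivatives via \eqref{eq:1DEulerCharacteristicForm}, and then add and subtract the resulting relations to isolate $\partial^+\partial^-c$ and $\partial^-\partial^+c$. The only slip is the sign pattern in your final algebraic identity (it should read $(\partial^+c+\partial^-c)^2 \mp (\partial^+c)^2 \pm (\partial^-c)^2 = 2\partial^\mp c(\partial^+c+\partial^-c)$), which is exactly the bookkeeping hazard you flagged and does not affect the argument.
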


\begin{proof}
For any smooth function $I(x,t)$, we have the Lie relation
\begin{equation}\label{eq:LieRelation}
\begin{split}
\partial^+\partial^-I-\partial^-\partial^+I&=\qnt{\partial^+u-\partial^+c}\partial_xI-\qnt{\partial^-u+\partial^-c}\partial_xI\\
&=\qnt{\partial^+u-\partial^+c}\frac{\partial^+I-\partial^-I}{2c}-\qnt{\partial^-u+\partial^-c}\frac{\partial^+I-\partial^-I}{2c}\\&=-\frac{\gamma+1}{\gamma-1}\frac{1}{2c}\qnt{\partial^+c+\partial^-c}\qnt{\partial^+I-\partial^-I},
\end{split}
\end{equation}
where \eqref{eq:1DEulerCharacteristicForm} is used to eliminate $\partial^{\pm}u$. 

By setting $I=c$ in \eqref{eq:LieRelation}, we obtain
\begin{equation}\label{eq:LieRelationC}
\partial^+\partial^-c-\partial^-\partial^+c=-\frac{\gamma+1}{\gamma-1}\frac{1}{2c}\qnt{\partial^+c+\partial^-c}\qnt{\partial^+c-\partial^-c}.
\end{equation}

Next, setting $I=u$ and applying \eqref{eq:1DEulerCharacteristicForm} to eliminate $\partial^\pm u$, a direct computation yields that 
\begin{equation}\label{eq:LieRelationC2}
\partial^+\partial^-c+\partial^-\partial^+c=\frac{\gamma+1}{\gamma-1}\frac{1}{2c}\qnt{\partial^+c+\partial^-c}^2.
\end{equation}

Combining \eqref{eq:LieRelationC} with \eqref{eq:LieRelationC2}, we arrive at \eqref{eq:CharacteristicDecomposition}. The proof is complete.
\end{proof}

\subsection{Reflections at the Leading Shock Front and the Piston}
Define
\begin{equation} \mathsf{k}(t) = \frac{\rho(s(t), t)}{\rho_\infty}, ~ t > 0. 
\end{equation}
Then, the entropy condition implies
\begin{equation} \mathsf{k}(t) > 1, ~ t > 0. 
\end{equation}
Furthermore, the Rankine–Hugoniot condition implies that
\begin{equation}\label{eq:ShockPolarURho}
u(s(t),t)=\rho_\infty^{\frac{\gamma-1}{2}}\sqrt{\qnt{1-\frac{\rho_\infty}{\rho(s(t),t)}}\qnt{\qnt{\frac{\rho(s(t),t)}{\rho_\infty}}^\gamma-1}}=\rho_\infty^{\frac{\gamma-1}{2}}f(k)\Big|_{k=\mathsf{k}(t)},
\end{equation}
and 
\begin{equation}\label{eq:RHconditionUS}
u(s(t),t)=s'(t)(1-\frac{1}{k})\Big|_{k=\mathsf{k}(t)}.
\end{equation}
Here and in the sequel, the function $\displaystyle f(k), k>1$ is defined by 
\begin{equation}\label{eq:fExpression}
f(k)=\sqrt{\qnt{1-\frac{1}{k}}\qnt{k^\gamma-1}}, ~k>1.
\end{equation}

As a consequence of \eqref{eq:ShockPolarURho} and \eqref{eq:RHconditionUS}, a direct computation yields that
\begin{equation}\label{eq:RHconditionSRho}
s'(t)=\rho_\infty^{\frac{\gamma-1}{2}}\frac{kf(k)}{k-1}\Big|_{k=\mathsf{k}(t)},
\end{equation}
which together with \eqref{eq:CharacteristicExpressionsX} yields the expression of the differential operator along the shock wave front $\partial^s$ in terms of $\partial^{\pm}$ as follows:
\begin{equation}\label{eq:SDerivativeExpression}
\partial^s:=\partial_t+s'(t)\partial_x=a(t)\partial^++b(t)\partial^-,
\end{equation}
where 
\begin{equation}\label{eq:abExpressions}
a(t)=\frac{f(k)+\sqrt{\gamma}(k-1)k^{\frac{\gamma-1}{2}}}{2\sqrt{\gamma}(k-1)k^{\frac{\gamma-1}{2}}}\Big|_{k=\mathsf{k}(t)}, ~
b(t)=\frac{\sqrt{\gamma}(k-1)k^{\frac{\gamma-1}{2}}-f(k)}{2\sqrt{\gamma}(k-1)k^{\frac{\gamma-1}{2}}}\Big|_{k=\mathsf{k}(t)}.
\end{equation}

Now, we are ready to give the transformation relation between $\partial^{\pm} c$ on shock $\mathsf{S}$.  

\begin{thm}[Reflection at shock front] \label{lem:R3}
Suppose that the flow field is smooth in $\Omega_{\mathsf{T}}$. Then, there holds that over shock  front $\mathsf{S}\cap\{t<\mathsf{T}\}$,  
\begin{equation}\label{eq:ShockRelations}
\partial^+c+k_g\partial^-c=0,
\end{equation}
where
\begin{equation}
k_g= \frac{\qnt{\sqrt{\gamma}(k-1)k^{\frac{\gamma-1}{2}}-f(k)}}{\qnt{f(k)+\sqrt{\gamma}(k-1)k^{\frac{\gamma-1}{2}}}}\frac{\frac{2}{\sqrt{\gamma}}f'(k)k^{\frac{3-\gamma}{2}}-1}{\frac{2}{\sqrt{\gamma}}f'(k)k^{\frac{3-\gamma}{2}}+1}\Big|_{k=\mathsf{k}(t)},
\end{equation}
and
\begin{equation*}
f'(k)=\frac{\gamma k^{\gamma+1}+(1-\gamma)k^{\gamma}-1}{2k^2f(k)}.
\end{equation*}

Furthermore, suppose that 
\begin{equation*}
\lim_{\rho_{\infty\to 0+}}\mathsf{k}(t)=+\infty,
\end{equation*}
and the limit is uniform with respect to $t\in \mathbb{R}^+.$ Then, it holds that 
\begin{equation}\label{eq:kgExpan}
1-k_g \sim \frac{6}{\sqrt{\gamma}}(\mathsf{k}(t))^{-\frac{1}{2}},
\end{equation}
where the notation $\sim$ is defined in Definition \ref{def:equilinfinitesmall}. 
% That is, if density behind shock is with lower order comparing to $\rho_\infty$ when $\rho_\infty\to0,$ there is slight dissipation on shock boundary.

\end{thm}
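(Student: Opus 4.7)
The plan is to view $(u,c)$ on $\mathsf{S}$ as functions of $\mathsf{k}(t)$ alone via the Rankine--Hugoniot identities, and to evaluate the shock--tangential derivative $\partial^s$ in two equivalent ways: once by direct time--differentiation along $\mathsf{S}$, and once via the decomposition $\partial^s = a\partial^+ + b\partial^-$. Equating the two representations yields a pair of linear equations for $\partial^\pm c$ whose ratio is exactly the reflection coefficient $-k_g$.

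First I would differentiate \eqref{eq:ShockPolarURho} and the formula $c|_{\mathsf{S}} = \sqrt{\gamma}\,\rho_\infty^{(\gamma-1)/2}\,\mathsf{k}^{(\gamma-1)/2}$ along $\mathsf{S}$ to obtain
\[
\partial^s u\big|_{\mathsf{S}} = \rho_\infty^{(\gamma-1)/2} f'(\mathsf{k})\,\mathsf{k}',\qquad
\partial^s c\big|_{\mathsf{S}} = \tfrac{\sqrt{\gamma}(\gamma-1)}{2}\,\rho_\infty^{(\gamma-1)/2}\,\mathsf{k}^{(\gamma-3)/2}\,\mathsf{k}',
\]
and the closed form for $f'(k)$ follows from $2ff' = (f^2)'$ applied to \eqref{eq:fExpression}. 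On the other hand, \eqref{eq:SDerivativeExpression} together with $\partial^\pm u = \mp\tfrac{2}{\gamma-1}\partial^\pm c$ gives
\[
\partial^s c = a\,\partial^+ c + b\,\partial^- c,\qquad
\tfrac{\gamma-1}{2}\,\partial^s u = -a\,\partial^+ c + b\,\partial^- c.
\]
Adding and subtracting isolates $\partial^+ c$ and $\partial^- c$ as scalar multiples of the common factor $\rho_\infty^{(\gamma-1)/2}\mathsf{k}'$. Taking the ratio cancels this common factor, producing the asserted identity $\partial^+ c + k_g\,\partial^- c = 0$ with $k_g$ expressed through $a/b$ and $f'(\mathsf{k})/(\sqrt{\gamma}\mathsf{k}^{(\gamma-3)/2})$; substituting \eqref{eq:abExpressions} yields the stated closed form.

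For the asymptotic \eqref{eq:kgExpan}, the key is to isolate the two independent sources of smallness. Writing $k_g = (b/a)\cdot\bigl(1 - 2/(\tilde Z+1)\bigr)$, where $\tilde Z$ denotes the scaled coefficient $\tfrac{1}{\sqrt{\gamma}}f'(\mathsf{k})\mathsf{k}^{(3-\gamma)/2}$ appearing in the second factor, we obtain
\[
1 - k_g \;=\; (1 - b/a) \;+\; \tfrac{b}{a}\cdot \tfrac{2}{\tilde Z + 1}.
\]
Using $f(k) = k^{\gamma/2}(1-1/k)^{1/2}(1-k^{-\gamma})^{1/2}$ together with $f'(k)\sim \tfrac{\gamma}{2}k^{\gamma/2-1}$ as $\mathsf{k}\to\infty$, I would show $1 - b/a \sim \tfrac{2}{\sqrt{\gamma}}\mathsf{k}^{-1/2}$ from the first term and $\tfrac{2}{\tilde Z + 1}\sim \tfrac{4}{\sqrt{\gamma}}\mathsf{k}^{-1/2}$ from the second, so that the two contributions add to $\tfrac{6}{\sqrt{\gamma}}\mathsf{k}^{-1/2}$. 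Uniformity in $t$ is inherited from the uniform hypothesis $\mathsf{k}(t)\to\infty$.

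The main obstacle is the careful bookkeeping of these two additive contributions that combine into the sharp constant $6/\sqrt{\gamma}$: a single naive estimate, tracking only one of the factors, would produce only $4/\sqrt{\gamma}$. Consequently one has to expand both $f(k)$ and $f'(k)$ to sufficient order, and verify that the remainder errors in each factor are genuinely $o(\mathsf{k}^{-1/2})$ uniformly in $t$, as required by the equivalent--infinitesimal convention of Definition \ref{def:equilinfinitesmall}. The rest of the argument is bookkeeping from elementary algebra.
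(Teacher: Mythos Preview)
Your approach is essentially identical to the paper's: differentiate the Rankine--Hugoniot relation along $\mathsf S$, decompose $\partial^s=a\,\partial^++b\,\partial^-$, eliminate $\partial^\pm u$ via \eqref{eq:1DEulerCharacteristicForm}, and then expand as $\mathsf k\to\infty$; the paper records this last step only as ``direct asymptotic analysis,'' whereas you supply the two--term splitting $1-k_g=(1-b/a)+(b/a)\cdot 2/(\tilde Z+1)$ that actually produces the constant $6/\sqrt\gamma$. One bookkeeping remark: your $\tilde Z=\tfrac{1}{\sqrt\gamma}f'(\mathsf k)\,\mathsf k^{(3-\gamma)/2}$ is exactly what the derivation yields (after cancelling the common $\tfrac{2}{\gamma-1}$ on both sides) and is what gives $2/(\tilde Z+1)\sim\tfrac{4}{\sqrt\gamma}\mathsf k^{-1/2}$ and hence $6/\sqrt\gamma$, but the displayed closed form for $k_g$ in the statement carries $\tfrac{2}{\sqrt\gamma}$ rather than $\tfrac{1}{\sqrt\gamma}$ --- this appears to be a typographical slip in the statement, not an error in your argument.
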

\begin{proof}
Differentiating \eqref{eq:ShockPolarURho} with respect to $t$, and substituting the definition of $\mathsf{k}(t)$ and the sound speed $c$, we obtain that at the shock wave front $\mathsf{S}$,
\begin{equation}\label{eq:EE2}
    \begin{aligned}
        &\partial^su=\rho_\infty^{\frac{\gamma-3}{2}}f'(k)\partial^s\rho\Big|_{k=\mathsf{k}(t)}=\rho_\infty^{\frac{\gamma-3}{2}}f'(k)\frac{2}{\sqrt{\gamma}(\gamma-1)}\rho^{\frac{3-\gamma}{2}}\partial^sc\Big|_{k=\mathsf{k}(t)}\\&= \frac{2}{\sqrt{\gamma}(\gamma-1)}f'(k)k^{\frac{3-\gamma}{2}}\partial^sc\Big|_{k=\mathsf{k}(t)}.
    \end{aligned}
\end{equation}
Then, Plugging \eqref{eq:SDerivativeExpression} into the former \eqref{eq:EE2}, we obtain that at the shock wave front $\mathsf{S}$,
\begin{equation}\label{eq:EE3}
\qnt{a(t)\partial^++b(t)\partial^-}u= \frac{2}{\sqrt{\gamma}(\gamma-1)}f'(k)k^{\frac{3-\gamma}{2}}\qnt{a(t)\partial^++b(t)\partial^-}c\Big|_{k=\mathsf{k}(t)},
\end{equation}
where $a(t)$ and $b(t)$ are given in \eqref{eq:abExpressions}. 

Moreover, due to \eqref{eq:1DEulerCharacteristicForm}, we eliminate $\partial^\pm u$ in \eqref{eq:EE3} to get that at the shock wave front $\mathsf{S}$,
\begin{equation}
\partial^+c+\frac{\frac{2}{\sqrt{\gamma}}f'(k)k^{\frac{3-\gamma}{2}}-1}{\frac{2}{\sqrt{\gamma}}f'(k)k^{\frac{3-\gamma}{2}}+1}\frac{b(t)}{a(t)}\partial^-c\Big|_{k=\mathsf{k}(t)}=0,
\end{equation}
which together with \eqref{eq:abExpressions} gives \eqref{eq:ShockRelations}. 

Finally, direct asymptotic analysis on \eqref{eq:ShockRelations} yields \eqref{eq:kgExpan}. The proof is complete. 
\end{proof}

We next derive the transformation relation between $\partial^{\pm}c$ over the piston trajectory $\mathsf{P}$.

\begin{thm}[Reflection at piston]
    Suppose that the flow field is smooth in $\Omega_{\mathsf{T}}$. Then, there holds that over piston trajectory $\mathsf{P} \cap \{t<\mathsf{T}\}$,  
\begin{equation}\label{eq:pistonrelations}
\partial^+c-\partial^-c=\qnt{1-\gamma}w''(t).
\end{equation}
\end{thm}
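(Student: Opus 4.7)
The plan is to differentiate the solid-wall boundary condition along the piston and then translate the resulting identity into characteristic derivatives using the already-established relation \eqref{eq:1DEulerCharacteristicForm}.

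First, I would start from the boundary condition $u(w(t),t)=w'(t)$ on $\mathsf{P}$ and differentiate it with respect to $t$ to obtain
\begin{equation*}
\partial_t u + w'(t)\,\partial_x u = w''(t) \quad \text{on } \mathsf{P}.
\end{equation*}
Since $u(w(t),t)=w'(t)$, the differential operator on the left is precisely $\partial_t+u\,\partial_x$ when evaluated on $\mathsf{P}$. Adding the two defining relations in \eqref{eq:CharacteristicsDerivatives} immediately gives
\begin{equation*}
\partial_t + u\,\partial_x = \tfrac{1}{2}\bigl(\partial^+ + \partial^-\bigr),
\end{equation*}
so on the piston we have $\tfrac{1}{2}(\partial^+ u+\partial^- u)=w''(t)$.

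Next, I would invoke the characteristic form \eqref{eq:1DEulerCharacteristicForm}, which lets me replace $\partial^+u$ by $-\tfrac{2}{\gamma-1}\partial^+c$ and $\partial^-u$ by $\tfrac{2}{\gamma-1}\partial^-c$. Substituting and multiplying by $\gamma-1$ yields
\begin{equation*}
\partial^-c-\partial^+c = (\gamma-1)\,w''(t),
\end{equation*}
which is exactly \eqref{eq:pistonrelations} after rearrangement.

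There is no real obstacle here: the argument is a two-line differentiation of the boundary condition followed by algebraic use of the $\pm$-Riemann-invariant identities. The only point worth handling carefully is justifying the interchange of $\partial_t+w'(t)\partial_x$ with $\tfrac12(\partial^++\partial^-)$, which relies solely on the piston boundary equality $w'(t)=u\big|_{\mathsf{P}}$ and the smoothness of the flow field assumed in $\Omega_{\mathsf{T}}$.
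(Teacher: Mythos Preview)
Your proposal is correct and follows essentially the same approach as the paper: differentiate the boundary condition $u(w(t),t)=w'(t)$ along the piston, rewrite $\partial_t+u\partial_x$ as $\tfrac12(\partial^++\partial^-)$, and eliminate $\partial^\pm u$ via \eqref{eq:1DEulerCharacteristicForm}. The only cosmetic difference is that the paper cites \eqref{eq:CharacteristicExpressionsX} to obtain $\partial_t u+u\partial_x u=\tfrac12(\partial^+u+\partial^-u)$, whereas you derive it directly from summing the two definitions in \eqref{eq:CharacteristicsDerivatives}.
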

\begin{proof}
    Differentiating the the third equation in \eqref{eq:MainProblem} with respect to $t,$ and substituting \eqref{eq:CharacteristicExpressionsX},
we have that at the piston trajectory $\mathsf{P}$, 
\begin{equation}
w''(t)=\partial_t u + u \partial_x u=
\frac{1}{2}\qnt{\partial^+u + \partial^-u},
\end{equation}
which together with \eqref{eq:1DEulerCharacteristicForm} gives \eqref{eq:pistonrelations}.
\end{proof}

\section{Proof of Theorem \ref{thm1}}

Based on the characteristic decomposition of compressible Euler equations and the dissipation at the shock front, we establish the following continuation criterion. There exist positive constants $\delta_1, \delta_2$ independent of $\mathsf{T}$ such that if 
      \begin{equation*}
(\mathcal{H})
\begin{cases}
\abs{\rho(x, t)-\rho(s(t), t)}\leq \rho_\infty^{\frac{\gamma+1}{2\gamma}},\\
\abs{s'(t)-u(x, t)}\leq (w'(t)^{\frac{\gamma-2}{\gamma}}+\delta_1)\rho_\infty^{\frac{\gamma-1}{\gamma}},\\
\max\set{\abs{t\partial^+c}, \abs{t\partial^-c}}\leq \delta_2\rho_\infty^{\frac{\gamma-1}{2\gamma}},
\end{cases}
\end{equation*}
holds over $\Omega_\mathsf{T}$, then a sharper estimate $(\tilde{{\mathcal{H}}})$ stated in Lemma \ref{lem:Apriori} is valid in the same domain. Consequently, the flow field can be extended from $\Omega_\mathsf{T}$ to $\Omega_{\mathsf{T+h}}$ with $(\mathcal{H})$ for some $\mathsf{h} > 0$ independent of $\mathsf{T}$ with $(\mathcal{H})$ remaining valid.

% for some positive constants $\delta_1$ and $\delta_2$. Then,  the asymptotic behavior flow fields and the length of characteristics can be analyzed in detail over $\Omega_\mathsf{T}$.  Moreover, we will show that there exist positive constants $\delta_1, \delta_2$ and $h$ independent of $\mathsf{T}$ such that the condition $({\mathcal{H}})$ holds over $\Omega_{\mathsf{T+h}}$.
% Here and in the sequel, we define 
% $$\Omega_\mathsf{T}:=\{t<\mathsf{T}\}\cap\Omega,~~  \mathsf{T} \in \mathbb{R}^+.$$

Specifically, as $\rho_\infty\to0+,$ the condition $(\mathcal{H})$ ensures that the leading part of $(\rho(x, t_0), u(x, t_0))$ matches the background solution corresponding to a piston moving at constant speed $w'(t_0)$. We clarify this in the following lemma.

\begin{lem}\label{lem:perturbationfirmula} 
Let $T$ be a positive constant. Suppose that \textup{(A1)} holds, and that $\displaystyle\qnt{\rho, u}\big|_{\Omega_T} \in C^1(\Omega_T)$ satisfies $(\mathcal{H})$. Then it holds that for $(x,t)\in\Omega_T,$ 
\begin{equation}\label{eq:perturbedFormula}
\begin{cases}
\rho(x, t)\sim w'(t)^{\frac{2}{\gamma}}\rho_\infty^{\frac{1}{\gamma}},\\
c(x, t)=\sqrt{\gamma}\rho(x, t)^{\frac{\gamma-1}{2}}\sim \sqrt{\gamma}w'(t)^{\frac{\gamma-1}{\gamma}}\rho_\infty^{\frac{\gamma-1}{2\gamma}},\\
\lambda_{+}(x, t)-s'(t)\sim \sqrt{\gamma}w'(t)^{\frac{\gamma-1}{\gamma}}\rho_\infty^{\frac{\gamma-1}{2\gamma}},\\
w'(t)-\lambda_-(x, t)\sim \sqrt{\gamma}w'(t)^{\frac{\gamma-1}{\gamma}}\rho_\infty^{\frac{\gamma-1}{2\gamma}},\\
1-k_g \sim \frac{6}{\sqrt{\gamma}}w'(t)^{-\frac{1}{\gamma}}\rho_\infty^{\frac{\gamma-1}{2\gamma}}.
\end{cases}
\end{equation}

Furthermore,  there exist maps 
\begin{equation*}
\begin{split}
\mathcal{E}:~~(0, \sqrt{\gamma}{w_*}^{\frac{\gamma-1}{\gamma}})&\longrightarrow \Real^+\\
\sigma&\longrightarrow \mathcal{E}(\sigma)
\end{split}
\end{equation*}
and
\begin{equation*}
\begin{split}
\mathcal{W}:~~(0, \frac{6}{\sqrt{\gamma}}{w^*}^{-\frac{1}{\gamma}})&\longrightarrow \Real^+\\
\sigma&\longrightarrow \mathcal{W}(\sigma)
\end{split}
\end{equation*}
such that for $(x,t)\in\Omega_T,$ when $\rho_\infty\in(0, \mathcal{E}(\sigma)),$ 
\begin{equation}\label{eq:lowerboundedonH}
\begin{cases}
\lambda_+(x, t)-s'(t)>\qnt{\sqrt{\gamma}{w_*}^{\frac{\gamma-1}{\gamma}}-\sigma}\rho^{\frac{\gamma-1}{2\gamma}}_\infty,\\
w'(t)-\lambda_-(x, t)>\qnt{\sqrt{\gamma}{w_*}^{\frac{\gamma-1}{\gamma}}-\sigma}\rho^{\frac{\gamma-1}{2\gamma}}_\infty,\\
\qnt{\sqrt{\gamma}{w^*}^{\frac{\gamma-1}{\gamma}}+\sigma}\rho^{\frac{\gamma-1}{2\gamma}}_\infty> c(x, t)> \qnt{\sqrt{\gamma}{w_*}^{\frac{\gamma-1}{\gamma}}-\sigma}\rho^{\frac{\gamma-1}{2\gamma}}_\infty,
\end{cases}
\end{equation}
and when $\rho_\infty\in (0, \mathcal{W}(\sigma)),$
\begin{equation}\label{eq:kgupperbound}
\abs{k_g}\leq 1- \qnt{\frac{6}{\sqrt{\gamma}}{w^*}^{-\frac{1}{\gamma}}-\sigma}\rho_\infty^{\frac{\gamma-1}{2\gamma}}.
\end{equation}
\end{lem}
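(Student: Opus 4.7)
The strategy is to bootstrap from the a priori assumption $(\mathcal{H})$ using the Rankine--Hugoniot condition at the leading shock, reducing the analysis to a parameterized version of Theorem~\ref{thm:ConstantCase}.

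First, I would show that the piston velocity and the shock speed are nearly equal uniformly in $t$. Applying the second inequality of $(\mathcal{H})$ at $x=w(t)$ together with the boundary condition $u(w(t),t)=w'(t)$ gives $|s'(t)-w'(t)|=\mathcal{O}(\rho_\infty^{(\gamma-1)/\gamma})$ uniformly on $\Omega_T$; evaluating the same inequality at $x=s(t)$ yields $|u(s(t),t)-w'(t)|=\mathcal{O}(\rho_\infty^{(\gamma-1)/\gamma})$. In view of \textup{(A1)}, $u(s(t),t)$ is therefore uniformly bounded above and below by positive constants and converges to $w'(t)$ uniformly as $\rho_\infty\to 0+$.

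Second, I would recycle the derivation of Theorem~\ref{thm:ConstantCase} verbatim with $w_0$ replaced by $u(s(t),t)$. The analogue of the first equation in \eqref{eq:ReformulationofE1} reads
\[
u(s(t),t)^2\rho_\infty^{1-\gamma}=h(\mathsf{k}(t)).
\]
Since $h$ is strictly increasing, smooth, and satisfies \eqref{eq:limitoff}, while the left-hand side blows up uniformly, the same monotonicity argument gives $\mathsf{k}(t)\to+\infty$ uniformly in $t$, together with
\[
\mathsf{k}(t)\sim w'(t)^{2/\gamma}\rho_\infty^{(1-\gamma)/\gamma},\qquad \rho(s(t),t)\sim w'(t)^{2/\gamma}\rho_\infty^{1/\gamma}.
\]
I would then propagate this asymptotic from the shock into the interior using the first inequality of $(\mathcal{H})$: the perturbation $|\rho(x,t)-\rho(s(t),t)|\le \rho_\infty^{(\gamma+1)/(2\gamma)}$ is of strictly smaller order than $\rho_\infty^{1/\gamma}$ (because $(\gamma+1)/(2\gamma)>1/\gamma$ for $\gamma>1$), so $\rho(x,t)\sim w'(t)^{2/\gamma}\rho_\infty^{1/\gamma}$ throughout $\Omega_T$. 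The formula for $c$ follows directly from \eqref{eq:Defofc}.

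Third, I would split $\lambda_+-s'(t)=(u-s'(t))+c$ and $w'(t)-\lambda_-=(w'(t)-u)+c$; the velocity differences are of order $\rho_\infty^{(\gamma-1)/\gamma}$, which is of strictly higher order than the sound-speed term $\rho_\infty^{(\gamma-1)/(2\gamma)}$, so $c$ is the dominant contribution and yields the third and fourth asymptotics of \eqref{eq:perturbedFormula}. The fifth asymptotic follows from \eqref{eq:kgExpan} of Theorem~\ref{lem:R3}, applied with the just-obtained expression for $\mathsf{k}(t)$, since $\mathsf{k}(t)^{-1/2}\sim w'(t)^{-1/\gamma}\rho_\infty^{(\gamma-1)/(2\gamma)}$.

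Finally, the existence of the maps $\mathcal{E}$ and $\mathcal{W}$ is a direct consequence of Definition~\ref{def:equilinfinitesmall}: given $\sigma>0$, the uniform convergence to $1$ of the ratios of the quantities in \eqref{eq:perturbedFormula} produces an explicit threshold below which the strict inequalities \eqref{eq:lowerboundedonH} and \eqref{eq:kgupperbound} hold with margin $\sigma$; the dependence of these thresholds on $\sigma$ defines $\mathcal{E}$ and $\mathcal{W}$. The main technical obstacle is the uniformity in $(x,t)\in\Omega_T$ of the various equivalent-infinitesimal statements, which must be verified carefully along the chain above; this uniformity is ultimately enforced by \textup{(A1)} (which confines $w'(t)$ to the compact interval $[w_*,w^*]$) together with the uniform-in-$(x,t)$ character of the bounds built into $(\mathcal{H})$.
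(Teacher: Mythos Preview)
Your proposal is correct and follows essentially the same route as the paper's proof: bound $|u(s(t),t)-w'(t)|$ via the second line of $(\mathcal{H})$ and the triangle inequality, invoke Theorem~\ref{thm:ConstantCase} with $w_0$ replaced by $u(s(t),t)$ to obtain $\rho(s(t),t)\sim w'(t)^{2/\gamma}\rho_\infty^{1/\gamma}$, propagate into the interior via the first line of $(\mathcal{H})$, then read off the asymptotics for $c$, $\lambda_\pm$, and $k_g$ and deduce $\mathcal{E},\mathcal{W}$ from Definition~\ref{def:equilinfinitesmall}. The only cosmetic difference is that the paper records the intermediate bound \eqref{eq:G2} explicitly, whereas you summarize it in words.
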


\begin{proof}
Due to the second formula in $(\mathcal{H}),$ a direct computation shows that 
\begin{equation}\label{eq:G2}
\begin{aligned}
    &\abs{u_S(t) - w'(t)}\\&\leq\abs{u_S(t)-s'(t)}+\abs{u(t, w(t))-s'(t)}\\
&\leq 2(w'(t)^{\frac{\gamma-2}{\gamma}}+\delta_1)\rho_{\infty}^{\frac{\gamma-1}{\gamma}}.
\end{aligned}
\end{equation}
Here and in the sequel, for simplification, we set 
\begin{equation}
\rho_S(t):=\lim_{x\to s(t)-}\rho(x, t),~ u_S(t):=\lim_{x\to s(t)-}u(x, t).
\end{equation}
Plugging the boundedness of $w'(t)$ in \textup{(A1)} into \eqref{eq:G2}, we conclude that for sufficiently small positive $\rho_\infty$, $u_S(t), t>0$ are in some compact subset of $\mathbb{R}^+.$ Thus, noting the R-H condition and entropy condition equals 
\begin{equation}\label{eq:E100}
\begin{cases}
u_S(t)=\sqrt{\frac{(\rho_S(t)-\rho_{\infty})(\rho_S^{\gamma}(t)-\rho^{\gamma}_{\infty})}{\rho_{\infty}\rho_S(t)}},\\
u_S(t)=\frac{(\rho_S(t)-\rho_{\infty})}{\rho_S(t)}s'(t),\\
\rho_S(t)>\rho_{\infty},
\end{cases}
\end{equation}
by Theorem \ref{thm:ConstantCase}, we obtain that 
\begin{equation}
    \rho_S(t)\sim u^{\frac{2}{\gamma}}_S(t)\rho_{\infty}^{\frac{1}{\gamma}},
\end{equation}
which together with \eqref{eq:G2} yields that 
\begin{equation}\label{eq:EE4}
\rho_S(t)\sim w'(t)^{\frac{2}{\gamma}}\rho_{\infty}^{\frac{1}{\gamma}}.
\end{equation}
Plugging the first formula in $(\mathcal{H})$ into \eqref{eq:EE4}, the first formula in \eqref{eq:perturbedFormula} follows directly. Moreover, due to $\displaystyle c=\sqrt{\gamma}\rho^{\frac{\gamma-1}{2}},$  the second formula in \eqref{eq:perturbedFormula} is obvious. We next prove the last three formulas in \eqref{eq:perturbedFormula}.

By the definition of $\lambda_\pm$ in \eqref{eq:Characteristics}, there holds that 
\begin{equation}
\lambda_+(x, t)-s'(t)=c(x, t)+\qnt{u(x, t)-s'(t)}
\end{equation}
and
\begin{equation}
\begin{aligned}
    &w'(t)-\lambda_-(x, t)\\&=c(x, t)+\qnt{w'(t)-u(x, t)}\\&=c(x, t)+\qnt{u(w(t), t)-u(x, t)}\\
&=c(x, t)+\qnt{u(w(t), t)-s'(t)}+\qnt{s'(t)-u(x, t)}.
\end{aligned}
\end{equation}
Substituting the obtained estimates on $c$ in the second formula in \eqref{eq:perturbedFormula} and the second formula in  $(\mathcal{H})$ into the former two equations, we obtain the third and the fourth formula in \eqref{eq:perturbedFormula}.

Finally, substituting $\displaystyle \rho(x, t)\sim w'(t)^{\frac{2}{\gamma}}\rho_\infty^{\frac{1}{\gamma}}$ into  \eqref{eq:kgExpan} in Theorem \ref{lem:R3}, we prove the last formula in \eqref{eq:perturbedFormula}. 

Note the definition of the notation $\sim$ in Definition \ref{def:equilinfinitesmall}. The existence of the maps $\mathcal{E}(\sigma)$ and $\mathcal{W}(\sigma)$ is a direct corollary of \eqref{eq:perturbedFormula}.
The proof is complete.
\end{proof}

For $P\in\Omega_T,$ we denote $F\Lambda^P_{\pm}$ and $B\Lambda^P_{\pm}$ as the forward and backward characteristic line passing $P.$ Specifically,
\begin{equation}
    \begin{aligned}
        &F\Lambda^P_{\pm}:=\{(\chi(t),t):\chi'(t)=\lambda_{\pm}(\chi(t),t),\chi(t_p)=r_p, t>t_p\},\\
        &B\Lambda^P_{\pm}:=\{(\chi(t),t):\chi'(t)=\lambda_{\pm}(\chi(t),t),\chi(t_p)=r_p, t<t_p\}.
    \end{aligned}
\end{equation}
We next demonstrate that the characteristics emitted from $\mathsf{P}$ and arriving at $\mathsf{S}$ (or vice versa) lie within a narrow temporal strip. This confinement, in light of \eqref{eq:CharacteristicDecomposition}, ensures that the variations of $\partial^\pm c$ along these characteristics can be effectively bounded by a small quantity.

\begin{lem}[Narrow estimate]\label{lem:r1}
Suppose that \textup{(A1)} holds and that $\displaystyle\qnt{\rho, u}\big|_{\Omega_T} \in C^1(\Omega_T)$ satisfies $(\mathcal{H})$. For any $\displaystyle P(x_p,t_p)\in\Omega_T,$ define $\displaystyle P_\pm(x_p^\pm, t_p^\pm):=F\Lambda^P_{\pm}\cap\partial\Omega_T$. Then, for any $\sigma\in(0,\sqrt{\gamma}{w_*}^{\frac{\gamma-1}{\gamma}})$, it holds that
\begin{equation}\label{eq:narrowont}
t_p^{\pm}-t_p\leq\frac{(\delta_1+{w_*}^{-\frac{1}{\gamma}}{w^*}^{\frac{\gamma-1}{\gamma}})}{\sqrt{\gamma}{w_*}^{\frac{\gamma-1}{\gamma}}-\sigma}\rho_\infty^{\frac{\gamma-1}{2\gamma}}t_p,
\end{equation}
provided that $\displaystyle\rho_\infty\in(0, \mathcal{E}(\sigma))$, where $\mathcal{E}(\sigma)$ is given in Lemma \ref{lem:r1}.
\end{lem}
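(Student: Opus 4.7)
The plan is to combine two estimates coming from Lemma \ref{lem:perturbationfirmula}: an upper bound on the width $s(t_p)-w(t_p)$ of the narrow dihedral region $\Omega_T$, and a strictly positive lower bound on the separations $\lambda_+-s'$ and $w'-\lambda_-$ between the characteristic speeds and the boundary speeds. Dividing width by separation yields \eqref{eq:narrowont}.

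\textbf{Width of $\Omega_T$.} The piston boundary condition gives $u(w(t),t)=w'(t)$, so evaluating the second line of $(\mathcal{H})$ at $x=w(t)$ produces $|s'(t)-w'(t)|\le(w'(t)^{(\gamma-2)/\gamma}+\delta_1)\rho_\infty^{(\gamma-1)/\gamma}$. Factoring $w'(t)^{(\gamma-2)/\gamma}=w'(t)^{-1/\gamma}\cdot w'(t)^{(\gamma-1)/\gamma}$ and bounding each factor separately by \textup{(A1)} gives $w'(t)^{(\gamma-2)/\gamma}\le {w_*}^{-1/\gamma}\,{w^*}^{(\gamma-1)/\gamma}$. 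Since $s(0)=w(0)=0$, integrating on $[0,t_p]$ yields
\[
s(t_p)-w(t_p)\le\bigl(\delta_1+{w_*}^{-1/\gamma}\,{w^*}^{(\gamma-1)/\gamma}\bigr)\rho_\infty^{(\gamma-1)/\gamma}\,t_p.
\]

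\textbf{Characteristic conversion.} By \eqref{eq:lowerboundedonH}, for $\rho_\infty\in(0,\mathcal{E}(\sigma))$ both $\lambda_+(x,t)-s'(t)$ and $w'(t)-\lambda_-(x,t)$ are bounded below on $\Omega_T$ by $(\sqrt{\gamma}\,{w_*}^{(\gamma-1)/\gamma}-\sigma)\rho_\infty^{(\gamma-1)/(2\gamma)}$. Writing $\chi^+(t)$ for the trajectory of $F\Lambda^P_+$, the gap $g(t):=s(t)-\chi^+(t)$ is nonnegative and strictly decreasing, so $F\Lambda^P_+$ exits $\Omega_T$ at some $t_p^+>t_p$ and
\[
\int_{t_p}^{t_p^+}\bigl(\lambda_+(\chi^+(\tau),\tau)-s'(\tau)\bigr)\,d\tau=g(t_p)-g(t_p^+)\le g(t_p)\le s(t_p)-w(t_p).
\]
Applying the lower bound on $\lambda_+-s'$ to the left-hand side and the width estimate to the right-hand side, then dividing, gives \eqref{eq:narrowont}. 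The analogous argument with $h(t):=\chi^-(t)-w(t)\ge 0$ replacing $g(t)$ and the lower bound on $w'-\lambda_-$ replacing that on $\lambda_+-s'$ handles $F\Lambda^P_-$.

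The argument is essentially a direct assembly of two geometric bounds, so there is no substantial obstacle; the only subtlety is that $F\Lambda^P_\pm$ might exit $\Omega_T$ through the top face $\{t=T\}$ rather than through $\mathsf{S}$ or $\mathsf{P}$. However, since $g(t_p^+)\ge 0$ (respectively $h(t_p^-)\ge 0$) in either case, the inequality $g(t_p)-g(t_p^+)\le g(t_p)$ remains valid and the estimate goes through unchanged.
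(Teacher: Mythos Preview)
Your proof is correct and follows essentially the same approach as the paper: bound the width $s(t_p)-w(t_p)$ by evaluating the second inequality of $(\mathcal{H})$ at $x=w(t)$ and integrating, then compare against the lower bounds \eqref{eq:lowerboundedonH} on $\lambda_+-s'$ and $w'-\lambda_-$ along the forward characteristics. Your explicit handling of the possibility that $F\Lambda^P_\pm$ exits through the top face $\{t=T\}$ is a welcome clarification that the paper leaves implicit (it is hidden in the observations $s(t_p^+)-x_p^+>0$ and $x_p^--w(t_p^-)>0$).
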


\begin{proof}
It directly follows from $(\mathcal{H})$ that 
\begin{equation*}
0<s(t_p)-w(t_p)=\int_{0}^{t_p}s'(\tau)-w'(\tau)\dif\tau\leq\qnt{\delta_1t_p+\int_{0}^{t_p}w'(\tau)^{\frac{\gamma-2}{\gamma}}\dif\tau}\rho_\infty^{\frac{\gamma-1}{\gamma}},
\end{equation*}
which together with the boundedness of $w'(\tau)$ in \textup{(A1)} and the fact that $\gamma>1$, yields the following narrow estimate 
\begin{equation}\label{eq:Narrowestimate}
0<s(t_p)-w(t_p)\leq(\delta_1+{w_*}^{-\frac{1}{\gamma}}{w^*}^{\frac{\gamma-1}{\gamma}})\rho_\infty^{\frac{\gamma-1}{\gamma}} t_p.
\end{equation}

Due to the definition of $\displaystyle F\Lambda^P_{\pm},$ it holds that  
\begin{equation}\label{eq:characteristicbound}
\begin{aligned}
&s(t_p^+)-x_p^++\int_{t_p}^{t_p^+}s'(\tau)-\lambda_+(\chi_+(\tau), \tau)\dif\tau=s(t_p)-x_p\leq s(t_p)-w(t_p),\\
&x_p^--w(t_p^-)+\int_{t_p}^{t_p^-}\lambda_-(\chi_-(\tau), \tau)-w'(\tau)\dif\tau=x_p-w(t_p)\leq s(t_p)-w(t_p).
\end{aligned}
\end{equation}
Further substituting the lower bounds of $\displaystyle s'(t)-\lambda_+(x, t)$ and $\displaystyle w'(t)-\lambda_-(x, t)$ given in \eqref{eq:lowerboundedonH} into \eqref{eq:characteristicbound}, and noting
\begin{equation*}
s(t_p^+)-x_p^+>0,\quad x_p^--w(t_p^-)>0,
\end{equation*}
we derive from \eqref{eq:Narrowestimate} and \eqref{eq:characteristicbound} that for any $\sigma\in(0,\sqrt{\gamma}{w_*}^{\frac{\gamma-1}{\gamma}}),$ it holds that  
\begin{equation*}
t_p^{\pm}-t_p\leq  \frac{(\delta_1+{w_*}^{-\frac{1}{\gamma}}{w^*}^{\frac{\gamma-1}{\gamma}})}{\sqrt{\gamma}{w_*}^{\frac{\gamma-1}{\gamma}}-\sigma}\rho_\infty^{\frac{\gamma-1}{2\gamma}}t_p,
\end{equation*}
provided $\displaystyle\rho_\infty\in(0, \mathcal{E}(\sigma)).$  The proof is complete.
\end{proof}

Based on the narrow estimates in Lemma \ref{lem:r1}, by \eqref{eq:CharacteristicDecomposition} along characteristics, we conduct the following a priori estimates.

\begin{lem}\label{lem:Apriori}
Suppose that \textup{(A1)} and \textup{(A3)} hold. There exist positive constants $\delta_1, \delta_2$ and a corresponding positive constant $\epsilon_A$,  such that if $\displaystyle
 \rho_\infty\in(0, \epsilon_A)$ and if $\displaystyle\qnt{\rho, u}\big|_{\Omega_T} \in C^1(\Omega_T)$ satisfies $(\mathcal{H})$, then it holds that $\displaystyle (\rho, u)\Big|_{\Omega_T}$ satisfies $(\tilde{\mathcal{H}})$, with 
\begin{equation*}
(\tilde{\mathcal{H}})
\begin{cases}
\abs{\rho(x, t)-\rho(s(t), t)}\leq \frac{1}{2}\rho_\infty^{\frac{\gamma+1}{2\gamma}},\\
\abs{s'(t)-u(x, t)}\leq (w'(t)^{\frac{\gamma-2}{\gamma}}+\frac{2\delta_1}{3})\rho_\infty^{\frac{\gamma-1}{\gamma}},\\
\max\set{\abs{t\partial^+c},\abs{t\partial^-c}}\leq (1-\frac{\hat{\nu}}{2}\rho_\infty^{\frac{\gamma-1}{2\gamma}})\delta_2\rho_\infty^{\frac{\gamma-1}{2\gamma}},
\end{cases}
\end{equation*}
for some positive constant $\hat{\nu}$ independent of $\rho_\infty$.
\end{lem}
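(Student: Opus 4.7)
The plan is to establish $(\tilde{\mathcal{H}})_3$ first, since control of $\partial^\pm c$ feeds directly into the bounds on $\rho$ and $u$. Introduce the bootstrap quantity
\[
M := \sup_{(x,t)\in\Omega_T}\, t\,\max\set{\abs{\partial^+c(x,t)},\,\abs{\partial^-c(x,t)}},
\]
so that the third line of $(\mathcal{H})$ reads $M\leq \delta_2\rho_\infty^{\frac{\gamma-1}{2\gamma}}$. Fix $P=(x_p,t_p)\in\Omega_T$ and trace the backward $-$ characteristic from $P$ until it first meets $\partial\Omega_T$ at a point $Q$; by the time-reversed (symmetric) version of Lemma \ref{lem:r1} the elapsed time is narrow, $t_p-t_Q\leq c_1\rho_\infty^{\frac{\gamma-1}{2\gamma}}t_p$. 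Along this characteristic \eqref{eq:CharacteristicDecomposition} yields $\abs{\partial^-\partial^+c}\leq C c^{-1}\qnt{\abs{\partial^+c}+\abs{\partial^-c}}\abs{\partial^+c}\leq CM^2/(c\,t^2)$; combining with $c\geq C'\rho_\infty^{\frac{\gamma-1}{2\gamma}}$ from Lemma \ref{lem:perturbationfirmula} and integrating over the narrow interval gives a variation $\abs{\partial^+c(P)-\partial^+c(Q)}=O(M^2/t_p)$, which is strictly subdominant. By symmetry the backward $+$ characteristic from $P$ hits the piston at some $Q'$ within an equally narrow time and controls $\partial^-c(P)$.

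The decisive step is the boundary term. At $Q\in\mathsf{S}$, Theorem \ref{lem:R3} and \eqref{eq:kgupperbound} give $\abs{\partial^+c(Q)}\leq \qnt{1-\hat{\nu}\rho_\infty^{\frac{\gamma-1}{2\gamma}}}\abs{\partial^-c(Q)}\leq \qnt{1-\hat{\nu}\rho_\infty^{\frac{\gamma-1}{2\gamma}}}M/t_Q$, which together with $t_p/t_Q\leq 1+2c_1\rho_\infty^{\frac{\gamma-1}{2\gamma}}$ yields
\[
t_p\abs{\partial^+c(P)}\leq \qnt{1+(2c_1-\hat{\nu})\rho_\infty^{\frac{\gamma-1}{2\gamma}}}M+O(\rho_\infty^{\frac{\gamma-1}{\gamma}}).
\]
For $\partial^-c$, at $Q'\in\mathsf{P}$ the piston reflection \eqref{eq:pistonrelations} converts $\partial^-c$ into $\partial^+c$ modulo $(\gamma-1)\abs{w''}$; by (A3) the piston forcing is $O(\rho_\infty^{\frac{\gamma-1}{\gamma}+\varrho}/t)$, strictly subdominant to $\rho_\infty^{\frac{\gamma-1}{2\gamma}}/t$, so the piston reflection is essentially conservative and an additional backward trace from $Q'$ along a $-$ characteristic reaches the shock where the above dissipation is reapplied. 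The combined inequalities close as a bootstrap provided $2c_1<\hat{\nu}$, and substituting the explicit forms of $c_1$ from Lemma \ref{lem:r1} and $\hat{\nu}$ from \eqref{eq:kgupperbound} and sending $\sigma,\delta_1\to 0$ reduces this to $6w_*>2w^*$, which is exactly \eqref{eq:WidelyVarying}. Hence the sharper bound in $(\tilde{\mathcal{H}})_3$ follows with some $\hat{\nu}>0$ independent of $\rho_\infty$.

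For $(\tilde{\mathcal{H}})_1$, fix $(x_0,t_0)\in\Omega_T$, follow its forward $+$ characteristic to $(s(t_1),t_1)\in\mathsf{S}$, and exploit $c=\sqrt{\gamma}\rho^{(\gamma-1)/2}$, whose derivative $\dif\rho/\dif c\sim \rho_\infty^{\frac{3-\gamma}{2\gamma}}$ converts $c$--variations into $\rho$--variations. Integrating $\partial^+c$ along the characteristic and $\partial^s c=a\partial^+c+b\partial^-c$ along the shock (with $a,b$ from \eqref{eq:abExpressions}) over the narrow interval $[t_0,t_1]$ gives $\abs{c(x_0,t_0)-c(s(t_0),t_0)}=O(M\rho_\infty^{\frac{\gamma-1}{2\gamma}})$, hence $\abs{\rho(x_0,t_0)-\rho(s(t_0),t_0)}=O(\delta_2\rho_\infty^{\frac{\gamma+1}{2\gamma}})$, which is below $\tfrac12\rho_\infty^{\frac{\gamma+1}{2\gamma}}$ once $\delta_2$ and $\rho_\infty$ are small. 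For $(\tilde{\mathcal{H}})_2$, the same trace combined with $\partial^+u=-\tfrac{2}{\gamma-1}\partial^+c$ gives $\abs{u(x_0,t_0)-u_S(t_1)}=O(\delta_2\rho_\infty^{\frac{\gamma-1}{\gamma}})$; the Rankine--Hugoniot asymptotic $s'(t)-u_S(t)\sim w'(t)^{\frac{\gamma-2}{\gamma}}\rho_\infty^{\frac{\gamma-1}{\gamma}}$, obtained by applying Theorem \ref{thm:ConstantCase} pointwise, supplies the leading term $w'(t_0)^{\frac{\gamma-2}{\gamma}}\rho_\infty^{\frac{\gamma-1}{\gamma}}$, and the remainder $\abs{s'(t_1)-s'(t_0)}$ is bounded by differentiating \eqref{eq:RHconditionSRho}; an appropriate choice of $\delta_1$ absorbs the residual lower-order constants into $\tfrac{2\delta_1}{3}$.

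The hard part is the balancing in the second paragraph: the narrow-time amplification $(1+2c_1\rho_\infty^{\frac{\gamma-1}{2\gamma}})$ must be strictly defeated by the shock dissipation $(1-\hat{\nu}\rho_\infty^{\frac{\gamma-1}{2\gamma}})$, and tracing the explicit constants collapses this requirement to the sharp assumption $w^*/w_*<3$. A secondary delicacy is that the piston reflection is neutral rather than dissipative, so one must either bootstrap $\partial^+c$ and $\partial^-c$ jointly or iterate the trace so that every backward path eventually arrives at the shock; the strict gap in (A3) between the exponents $\frac{\gamma-1}{\gamma}+\varrho$ and $\frac{\gamma-1}{2\gamma}$ is exactly what prevents the piston forcing $\abs{w''}$ from spoiling closure.
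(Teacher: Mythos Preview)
Your proposal is correct and follows essentially the same route as the paper: trace $\partial^\pm c$ backward along characteristics to the shock (directly, or via the neutral piston reflection), use Lemma~\ref{lem:r1} to bound the elapsed time and the reflection relations \eqref{eq:ShockRelations}--\eqref{eq:pistonrelations} at the boundaries, and close by balancing the time-ratio amplification $1+2c_1\rho_\infty^{(\gamma-1)/(2\gamma)}$ against the shock factor $|k_g|\leq 1-\hat\nu\rho_\infty^{(\gamma-1)/(2\gamma)}$, which collapses precisely to $w^*/w_*<3$. The only cosmetic differences are your use of the supremum $M$ rather than plugging $(\mathcal{H})$ in directly, and your $C^0$ step, where the paper instead bounds $\rho_x,u_x$ via \eqref{eq:CharacteristicExpressionsX} and integrates in $x$ across the narrow strip $[w(t),s(t)]$; both routes yield the same orders.
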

\begin{proof}
For $Q(x_0, t_0)\in\Omega_T,$ we denote 
\begin{equation*}
Q_1(x_1, t_1):=B\Lambda^Q_{+}\cap \mathsf{P},~  Q_2(x_2, t_2):=B\Lambda^{Q_1}_{-}\cap \mathsf{S}.
\end{equation*}
By the narrow estimates \eqref{eq:narrowont}, we have that for any $\sigma\in(0,\sqrt{\gamma}{w_*}^{\frac{\gamma-1}{\gamma}})$,
\begin{equation}\label{eq:EE7}
    \begin{aligned}
        &1\leq\frac{t_0}{t_1}\leq 1+ \frac{(\delta_1+{w_*}^{-\frac{1}{\gamma}}{w^*}^{\frac{\gamma-1}{\gamma}})}{\sqrt{\gamma}{w_*}^{\frac{\gamma-1}{\gamma}}-\sigma}\rho_\infty^{\frac{\gamma-1}{2\gamma}},\\
        & 1\leq {\frac{t_0}{t_2}}\leq\qnt{ 1+ \frac{(\delta_1+{w_*}^{-\frac{1}{\gamma}}{w^*}^{\frac{\gamma-1}{\gamma}})}{\sqrt{\gamma}{w_*}^{\frac{\gamma-1}{\gamma}}-\sigma}\rho_\infty^{\frac{\gamma-1}{2\gamma}}}^2,
    \end{aligned} 
\end{equation}
provided that $\displaystyle\rho_\infty\in(0, \mathcal{E}(\sigma))$.

Integrating the first equation of \eqref{eq:CharacteristicDecomposition} along $B\Lambda^Q_{+}$ and along $B\Lambda^{Q_1}_-$ respectively, we arrive at   
\begin{equation}\label{eq:ee1}
\begin{aligned}
&\partial^-c(Q)=\partial^-c(Q_1)+\int_{t_1}^{t_0} \frac{\gamma+1}{\gamma-1}\frac{1}{2c}(\partial^+c+\partial^-c)\partial^-c\Big|_{B\Lambda^Q_{+}}\dif t,\\
&\partial^+c(Q_1)=\partial^+c(Q_2)+\int_{t_2}^{t_1} \frac{\gamma+1}{\gamma-1}\frac{1}{2c}(\partial^+c+\partial^-c)\partial^+c\Big|_{B\Lambda^{Q_1}_-}\dif t.
\end{aligned}
\end{equation}
Recalling the transformation relations at the piston and the shock given in \eqref{eq:ShockRelations} and \eqref{eq:pistonrelations} respectively, we have that 
\begin{equation}\label{eq:R2}
\begin{aligned}
&\partial^-c(Q_1)=\partial^+c(Q_1)+(\gamma-1)w''(t_1),\\
&\partial^+c(Q_2)=-k_g\partial^-c(Q_2).
\end{aligned}
\end{equation}
Then, combining \eqref{eq:ee1} and \eqref{eq:R2}, we get the relation between $\partial^-c(Q)$ and $\partial^-c(Q_2)$
\begin{equation}\label{eq:EE5}
\begin{split}
\partial^-c(Q)&=-k_g\partial^-c(Q_2)+(\gamma-1)w''(t_1)\\&+\int_{t_2}^{t_1} \frac{\gamma+1}{\gamma-1}\frac{1}{2c}(\partial^+c+\partial^-c)\partial^+c|_{(x, t)=(\chi_-(t), t)}\dif t\\
&+\int_{t_1}^{t_0} \frac{\gamma+1}{\gamma-1}\frac{1}{2c}(\partial^+c+\partial^-c)\partial^-c|_{(x, t)=(\chi_+(t), t)}\dif t.
\end{split}
\end{equation}

Due to $\displaystyle(\rho, u)\Big|_{\Omega_T}$ satisfying $(\mathcal{H})$, the lower bound estimate on sound speed $c$ in \eqref{eq:lowerboundedonH} and the narrow estimates \eqref{eq:narrowont} hold. Thus, substituting \eqref{eq:lowerboundedonH} \eqref{eq:narrowont} and the a priori assumption $\mathcal{H}$ into \eqref{eq:EE5}, a direct computation yields that for any $\displaystyle\sigma_i\in (0, \sqrt{\gamma}{w_*}^{\frac{\gamma-1}{\gamma}}), i=1,2,$
\begin{equation}\label{eq:Longest1}
\begin{aligned}
\abs{\partial^-c(Q)}&\leq \abs{k_g}\delta_2\rho_\infty^{\frac{\gamma-1}{2\gamma}}\frac{1}{t_2} + \delta_2^2 \frac{\gamma+1}{\gamma-1} \frac{1}{\sqrt{\gamma}{w_*}^{\frac{\gamma-1}{\gamma}}-\sigma_2} \frac{(\delta_1+{w_*}^{-\frac{1}{\gamma}}{w^*}^{\frac{\gamma-1}{\gamma}})}{\sqrt{\gamma}{w_*}^{\frac{\gamma-1}{\gamma}}-\sigma_1} \rho_\infty^{\frac{\gamma-1}{\gamma}}\frac{1}{t_2}\\
&\quad+\delta_2^2 \frac{\gamma+1}{\gamma-1} \frac{1}{\sqrt{\gamma}{w_*}^{\frac{\gamma-1}{\gamma}}-\sigma_2} \frac{(\delta_1+{w_*}^{-\frac{1}{\gamma}}{w^*}^{\frac{\gamma-1}{\gamma}})}{\sqrt{\gamma}{w_*}^{\frac{\gamma-1}{\gamma}}-\sigma_1} \rho_\infty^{\frac{\gamma-1}{\gamma}}\frac{1}{t_1} + (\gamma-1)w''(t_1),
\end{aligned}
\end{equation}
provided that $\rho_\infty\in\qnt{0, \min\set{\mathcal{E}(\sigma_1), \mathcal{E}(\sigma_2)}}.$ Specially, we take $\sigma_2=\frac{\sqrt{\gamma}{w_*}^{\frac{\gamma-1}{\gamma}}}{2}$ to simplify \eqref{eq:Longest1} into 
\begin{equation*}
\begin{split}
\abs{\partial^-c(Q)}&\leq \abs{k_g}\frac{\delta_2\rho_\infty^{\frac{\gamma-1}{2\gamma}}}{t_2} + \frac{2\delta_2\qnt{\gamma+1}}{\qnt{\gamma-1}\sqrt{\gamma}{w_*}^{\frac{\gamma-1}{\gamma}}} \frac{(\delta_1+{w_*}^{-\frac{1}{\gamma}}{w^*}^{\frac{\gamma-1}{\gamma}})}{\sqrt{\gamma}{w_*}^{\frac{\gamma-1}{\gamma}}-\sigma_1} \frac{\delta_2\rho_\infty^{\frac{\gamma-1}{\gamma}}}{t_2}\\
&\quad+\frac{2\delta_2\qnt{\gamma+1}}{\qnt{\gamma-1}\sqrt{\gamma}{w_*}^{\frac{\gamma-1}{\gamma}}} \frac{(\delta_1+{w_*}^{-\frac{1}{\gamma}}{w^*}^{\frac{\gamma-1}{\gamma}})}{\sqrt{\gamma}{w_*}^{\frac{\gamma-1}{\gamma}}-\sigma_1} \frac{\delta_2\rho_\infty^{\frac{\gamma-1}{\gamma}}}{t_1} + (\gamma-1)w''(t_1),
\end{split}
\end{equation*}
for $\displaystyle\rho_\infty\in\qnt{0, \min\set{\mathcal{E}(\sigma_1), \mathcal{E}(\frac{\sqrt{\gamma}{w_*}^{\frac{\gamma-1}{\gamma}}}{2})}}.$

Multiplying the former formula by $t_0$, and applying \eqref{eq:EE7}, we obtain that 
\begin{equation}\label{eq:DeriativeRR1}
\begin{split}
\abs{t_0\partial^-c(Q)}&\leq \abs{k_g}\qnt{1+\frac{(\delta_1+{w_*}^{-\frac{1}{\gamma}}{w^*}^{\frac{\gamma-1}{\gamma}})}{\sqrt{\gamma}{w_*}^{\frac{\gamma-1}{\gamma}}-\sigma_1}\rho_\infty^{\frac{\gamma-1}{2\gamma}}}^2\delta_2\rho_\infty^{\frac{\gamma-1}{2\gamma}}\\
&\quad+\frac{2\qnt{\gamma+1}}{\qnt{\gamma-1}\sqrt{\gamma}{w_*}^{\frac{\gamma-1}{\gamma}}} \frac{(\delta_1+{w_*}^{-\frac{1}{\gamma}}{w^*}^{\frac{\gamma-1}{\gamma}})}{\sqrt{\gamma}{w_*}^{\frac{\gamma-1}{\gamma}}-\sigma_1} \qnt{1+\frac{(\delta_1+{w_*}^{-\frac{1}{\gamma}}{w^*}^{\frac{\gamma-1}{\gamma}})}{\sqrt{\gamma}{w_*}^{\frac{\gamma-1}{\gamma}}-\sigma_1}\rho_\infty^{\frac{\gamma-1}{2\gamma}}}\\
&\quad\quad\cdot\qnt{2+\frac{(\delta_1+{w_*}^{-\frac{1}{\gamma}}{w^*}^{\frac{\gamma-1}{\gamma}})}{\sqrt{\gamma}{w_*}^{\frac{\gamma-1}{\gamma}}-\sigma_1}\rho_\infty^{\frac{\gamma-1}{2\gamma}}} \qnt{\delta_2\rho_\infty^{\frac{\gamma-1}{2\gamma}}}^2\\
&\quad+(\gamma-1) \qnt{1+\frac{(\delta_1+{w_*}^{-\frac{1}{\gamma}}{w^*}^{\frac{\gamma-1}{\gamma}})}{\sqrt{\gamma}{w_*}^{\frac{\gamma-1}{\gamma}}-\sigma_1}\rho_\infty^{\frac{\gamma-1}{2\gamma}}} \abs{t_1w''(t_1)}\\
\end{split}
\end{equation}
 for $\displaystyle\rho_\infty\in\qnt{0,\min\set{\mathcal{E}(\sigma_1),\mathcal{E}(\frac{\sqrt{\gamma}{w_*}^{\frac{\gamma-1}{\gamma}}}{2})}}.$ 
 
 Moreover, substituting the upper bound of $k_g$ given in \eqref{eq:kgupperbound} into \eqref{eq:DeriativeRR1} yields that for any $\displaystyle\sigma_1\in(0, \frac{6}{\sqrt{\gamma}}{w^*}^{-\frac{1}{\gamma}})$ and $\displaystyle \sigma_3\in(0, \sqrt{\gamma}{w_*}^{\frac{\gamma-1}{\gamma}})$,
 \begin{equation}\label{eq:Essentialestiamte}
\begin{aligned}
&\frac{\abs{t_0\partial^-c(Q)}}{\delta_2\rho_\infty^{\frac{\gamma-1}{2\gamma}}}\\&\leq 1-\qnt{\frac{6}{\sqrt{\gamma}}{w^*}^{-\frac{1}{\gamma}}-\sigma_3-\frac{2(\delta_1+{w_*}^{-\frac{1}{\gamma}}{w^*}^{\frac{\gamma-1}{\gamma}})}{\sqrt{\gamma}{w_*}^{\frac{\gamma-1}{\gamma}}-\sigma_1}}\rho_\infty^{\frac{\gamma-1}{2\gamma}}\\
&\quad+\qnt{\qnt{\frac{(\delta_1+{w_*}^{-\frac{1}{\gamma}}{w^*}^{\frac{\gamma-1}{\gamma}})}{\sqrt{\gamma}{w_*}^{\frac{\gamma-1}{\gamma}}-\sigma_1}}^2-2\qnt{\frac{6}{\sqrt{\gamma}}{w^*}^{-\frac{1}{\gamma}}-\sigma_3}\qnt{\frac{(\delta_1+{w_*}^{-\frac{1}{\gamma}}{w^*}^{\frac{\gamma-1}{\gamma}})}{\sqrt{\gamma}{w_*}^{\frac{\gamma-1}{\gamma}}-\sigma_1}}}\rho_\infty^{\frac{\gamma-1}{\gamma}}\\
&\quad+\qnt{\frac{6}{\sqrt{\gamma}}{w^*}^{-\frac{1}{\gamma}}-\sigma_3}\qnt{\frac{(\delta_1+{w_*}^{-\frac{1}{\gamma}}{w^*}^{\frac{\gamma-1}{\gamma}})}{\sqrt{\gamma}{w_*}^{\frac{\gamma-1}{\gamma}}-\sigma_1}}^2\rho_\infty^{\frac{3\qnt{\gamma-1}}{2\gamma}}\\
&\quad+\frac{2\qnt{\gamma+1}}{\qnt{\gamma-1}\sqrt{\gamma}{w_*}^{\frac{\gamma-1}{\gamma}}}\frac{(\delta_1+{w_*}^{-\frac{1}{\gamma}}{w^*}^{\frac{\gamma-1}{\gamma}})}{\sqrt{\gamma}{w_*}^{\frac{\gamma-1}{\gamma}}-\sigma_1}\qnt{1+\frac{(\delta_1+{w_*}^{-\frac{1}{\gamma}}{w^*}^{\frac{\gamma-1}{\gamma}})}{\sqrt{\gamma}{w_*}^{\frac{\gamma-1}{\gamma}}-\sigma_1}\rho_\infty^{\frac{\gamma-1}{2\gamma}}}\\
&\quad\quad\cdot\qnt{2+\frac{(\delta_1+{w_*}^{-\frac{1}{\gamma}}{w^*}^{\frac{\gamma-1}{\gamma}})}{\sqrt{\gamma}{w_*}^{\frac{\gamma-1}{\gamma}}-\sigma}\rho_\infty^{\frac{\gamma-1}{2\gamma}}}\delta_2\rho_\infty^{\frac{\gamma-1}{2\gamma}}\\
&\quad+(\gamma-1)\qnt{1+\frac{(\delta_1+{w_*}^{-\frac{1}{\gamma}}{w^*}^{\frac{\gamma-1}{\gamma}})}{\sqrt{\gamma}{w_*}^{\frac{\gamma-1}{\gamma}}-\sigma_1}\rho_\infty^{\frac{\gamma-1}{2\gamma}}}\frac{\abs{t_1w''(t_1)}}{\delta_2\rho_\infty^{\frac{\gamma-1}{2\gamma}}},
\end{aligned}
 \end{equation}
 provided $\displaystyle\rho_\infty\in\qnt{0,\min\{\mathcal{E}(\sigma_1),\mathcal{E}(\frac{\sqrt{\gamma}{w_*}^{\frac{\gamma-1}{\gamma}}}{2}),\mathcal{W}(\sigma_3)\}}.$ 

 Collect the coefficient of $-\rho_\infty^{\frac{\gamma-1}{2\gamma}}$ in \eqref{eq:Essentialestiamte}, and denote it by $\displaystyle\mathcal{T}(\sigma_1,\sigma_3,\delta_1,\delta_2).$ It is obvious that  $\displaystyle\mathcal{T}(\sigma_1,\sigma_3,\delta_1,\delta_2)$ is a continuous function associated to $\displaystyle\sigma_1,\sigma_3,\delta_1,\delta_2.$ Due to \textup{(A1)}, it holds that 
\begin{equation}
\lim_{(\sigma_1,\sigma_3,\delta_1,\delta_2)\to(0+,0+,0+,0+)}\mathcal{T}(\sigma_1,\sigma_3,\delta_1,\delta_2)=\frac{2}{\sqrt{\gamma}}{w^*}^{-\frac{1}{\gamma}}\qnt{3-\frac{{w^*}}{{w_*}}}>0,
\end{equation}
 which together the continuity of $\displaystyle\mathcal{T}(\sigma_1,\sigma_3,\delta_1,\delta_2)$, yields that there exist positive constants $\hat{\nu}\in \mathbb{R}^+$ and 
\begin{equation}
\hat{\sigma}\in(0,\frac{\min\{\sqrt{\gamma}{w_*}^{\frac{\gamma-1}{\gamma}},\frac{6}{\sqrt{\gamma}}{w^*}^{-\frac{1}{\gamma}}\}}{2}),
\end{equation} 
such that 
\begin{equation}
\mathcal{T}(\sigma_1,\sigma_3,\delta_1,\delta_2)>\hat{\nu},
\end{equation}
when $\sigma_1=\sigma_3=\delta_1=\hat{\sigma}$ and $\delta_2\in(0,\hat{\sigma}).$

Therefore, by taking $\sigma_1=\sigma_3=\delta_1=\hat{\sigma}$ and $\delta_2\in(0,\hat{\sigma}),$ it follows from \eqref{eq:Essentialestiamte} that
\begin{equation}\label{eq:Esentialestimate2}
\begin{aligned}
&\frac{\abs{t_0\partial^-c(Q)}}{\delta_2\rho_\infty^{\frac{\gamma-1}{2\gamma}}}\\&\leq 1-\hat{\nu}\rho_\infty^{\frac{\gamma-1}{2\gamma}}
+(\gamma-1)\qnt{1+\frac{(\hat{\sigma}+{w_*}^{-\frac{1}{\gamma}}{w^*}^{\frac{\gamma-1}{\gamma}})}{\sqrt{\gamma}{w_*}^{\frac{\gamma-1}{\gamma}}-\hat{\sigma}}\rho_\infty^{\frac{\gamma-1}{2\gamma}}}\frac{\abs{t_1w''(t_1)}}{\delta_2\rho_\infty^{\frac{\gamma-1}{2\gamma}}}\\
&\quad+\qnt{\qnt{\frac{(\hat{\sigma}+{w_*}^{-\frac{1}{\gamma}}{w^*}^{\frac{\gamma-1}{\gamma}})}{\sqrt{\gamma}{w_*}^{\frac{\gamma-1}{\gamma}}-\hat{\sigma}}}^2-2\qnt{\frac{6}{\sqrt{\gamma}}{w^*}^{-\frac{1}{\gamma}}-\hat{\sigma}}\qnt{\frac{(\hat{\sigma}+{w_*}^{-\frac{1}{\gamma}}{w^*}^{\frac{\gamma-1}{\gamma}})}{\sqrt{\gamma}{w_*}^{\frac{\gamma-1}{\gamma}}-\hat{\sigma}}}}\rho_\infty^{\frac{\gamma-1}{\gamma}}\\
&\quad+\qnt{\frac{6}{\sqrt{\gamma}}{w^*}^{-\frac{1}{\gamma}}-\hat{\sigma}}\qnt{\frac{(\hat{\sigma}+{w_*}^{-\frac{1}{\gamma}}{w^*}^{\frac{\gamma-1}{\gamma}})}{\sqrt{\gamma}{w_*}^{\frac{\gamma-1}{\gamma}}-\hat{\sigma}}}^2\rho_\infty^{\frac{3\qnt{\gamma-1}}{2\gamma}},
\end{aligned}
\end{equation}
for $\rho_\infty\in\qnt{0,\min\{\mathcal{E}(\hat{\sigma}),\mathcal{E}(\frac{\sqrt{\gamma}{w_*}^{\frac{\gamma-1}{\gamma}}}{2}),\mathcal{W}(\hat{\sigma})\}}.$ 

Further substituting the decay assumption on piston $\mathsf{P}$ in \textup{(A3)} into \eqref{eq:Esentialestimate2}, we have that 
\begin{equation}\label{eq:essentialestimate3}
\begin{split}
&\frac{\abs{t_0\partial^-c(Q)}}{\delta_2\rho_\infty^{\frac{\gamma-1}{2\gamma}}}\\&\leq 1-\hat{\nu}\rho_\infty^{\frac{\gamma-1}{2\gamma}}+(\gamma-1)\qnt{1+\frac{(\hat{\sigma}+{w_*}^{-\frac{1}{\gamma}}{w^*}^{\frac{\gamma-1}{\gamma}})}{\sqrt{\gamma}{w_*}^{\frac{\gamma-1}{\gamma}}- \hat{\sigma}}\rho_\infty^{\frac{\gamma-1}{2\gamma}}}\frac{\kappa}{\delta_2}\rho_\infty^{\frac{\gamma-1}{2\gamma}+\varrho}\\
&\quad+\qnt{\qnt{\frac{(\hat{\sigma}+{w_*}^{-\frac{1}{\gamma}}{w^*}^{\frac{\gamma-1}{\gamma}})}{\sqrt{\gamma}{w_*}^{\frac{\gamma-1}{\gamma}}-\hat{\sigma}}}^2-2\qnt{\frac{6}{\sqrt{\gamma}}{w^*}^{-\frac{1}{\gamma}}-\hat{\sigma}}\qnt{\frac{(\hat{\sigma}+{w_*}^{-\frac{1}{\gamma}}{w^*}^{\frac{\gamma-1}{\gamma}})}{\sqrt{\gamma}{w_*}^{\frac{\gamma-1}{\gamma}}-\hat{\sigma}}}}\rho_\infty^{\frac{\gamma-1}{\gamma}}\\
&\quad+\qnt{\frac{6}{\sqrt{\gamma}}{w^*}^{-\frac{1}{\gamma}}-\hat{\sigma}}\qnt{\frac{(\hat{\sigma}+{w_*}^{-\frac{1}{\gamma}}{w^*}^{\frac{\gamma-1}{\gamma}})}{\sqrt{\gamma}{w_*}^{\frac{\gamma-1}{\gamma}}-\hat{\sigma}}}^2\rho_\infty^{\frac{3\qnt{\gamma-1}}{2\gamma}},
\end{split}
\end{equation}
for $\rho_\infty\in\qnt{0,\min\set{\mathcal{E}(\hat{\sigma}),\mathcal{E}(\frac{\sqrt{\gamma}{w_*}^{\frac{\gamma-1}{\gamma}}}{2}),\mathcal{W}(\hat{\sigma})}}.$

 Note that in \eqref{eq:essentialestimate3}, for any fixed $\displaystyle\delta_2\in (0,\hat{\sigma})$, the coefficients of the terms involving $$\displaystyle\rho_\infty^{\frac{\gamma-1}{2\gamma}+\varrho}, \rho_\infty^{\frac{\gamma-1}{\gamma}}, \rho_\infty^{\frac{3\qnt{\gamma-1}}{2\gamma}}$$ are uniformly bounded. We conclude from \eqref{eq:essentialestimate3} that there is a map
\begin{equation*}
\begin{split}
\mathcal{V}:~~(0,\hat{\sigma})&\longrightarrow \Real^+\\
\delta_2&\longrightarrow \mathcal{V}(\delta_2),
\end{split}
\end{equation*}
such that when $\rho_\infty\in\qnt{0,\min\{\mathcal{E}(\hat{\sigma}),\mathcal{E}(\frac{\sqrt{\gamma}{w_*}^{\frac{\gamma-1}{\gamma}}}{2}),\mathcal{W}(\hat{\sigma}),\mathcal{V}(\delta_2)\}},$ there holds that 
\begin{equation}\label{eq:Fianal1}
\frac{\abs{t_0\partial^-c(Q)}}{\delta_2\rho_\infty^{\frac{\gamma-1}{2\gamma}}}\leq 1-\frac{\hat{\nu}}{2}\rho_\infty^{\frac{\gamma-1}{2\gamma}}.
\end{equation}

In the same way, we have 
\begin{equation}\label{eq:final2}
\frac{\abs{t_0\partial^+c(Q)}}{\delta_2\rho_\infty^{\frac{\gamma-1}{2\gamma}}}\leq 1-\frac{\hat{\nu}}{2}\rho_\infty^{\frac{\gamma-1}{2\gamma}},
\end{equation}
for $\rho_\infty\in\qnt{0,\min\{\mathcal{E}(\hat{\sigma}),\mathcal{E}(\frac{\sqrt{\gamma}{w_*}^{\frac{\gamma-1}{\gamma}}}{2}),\mathcal{W}(\hat{\sigma}),\mathcal{V}(\delta_2)\}}.$

We have now nearly completed the estimates on the derivatives. We next turn our attention to the $C^0$ estimate. 

Due to \eqref{eq:1DEulerCharacteristicForm} and \eqref{eq:CharacteristicExpressionsX}, $\displaystyle\partial^{\pm}u$ are specific functions in terms of $\displaystyle\partial^{\pm}c$ and $c$. Moreover, substituting  the lower bound on $c$ in Lemma \eqref{lem:perturbationfirmula} and the upper bounds of $\partial^{\pm}c$ in \eqref{eq:Fianal1} and \eqref{eq:final2}, into these specific expressions, we arrive at that
\begin{equation}\label{eq:EE11}
\begin{aligned}
&\abs{tu_x}\leq \frac{4\delta_2}{\sqrt{\gamma}(\gamma-1){w_*}^{\frac{\gamma-1}{\gamma}}},\\
&\abs{t\rho_x}=\abs{\gamma^{\frac{1}{1-\gamma}}c^{\frac{3-\gamma}{\gamma-1}}c_x}\leq\frac{\delta_2\gamma^{\frac{1}{1-\gamma}}\qnt{\sqrt{\gamma}{w_*}^{\frac{\gamma-1}{\gamma}}+\hat{\sigma}}^{\frac{3-\gamma}{\gamma-1}}}{\sqrt{\gamma}{w_*}^{\frac{\gamma-1}{\gamma}}}\rho_\infty^{\frac{3-\gamma}{2\gamma}},
\end{aligned}
\end{equation}
for $\displaystyle \rho_\infty\in\qnt{0, \min\set{\mathcal{E}(\hat{\sigma}), \mathcal{E}(\frac{\sqrt{\gamma}{w_*}^{\frac{\gamma-1}{\gamma}}}{2}), \mathcal{W}(\hat{\sigma}), \mathcal{V}(\delta_2)}},$  which together with the estimates on the distance between shock and piston in \eqref{eq:Narrowestimate}, yields that 
\begin{equation}\label{eq:hH2}
\begin{split}
\abs{u(x, t) - u(s(t), t)}&\leq \frac{4(\hat{\sigma}+{w_*}^{-\frac{1}{\gamma}}{w^*}^{\frac{\gamma-1}{\gamma}})}{\sqrt{\gamma}(\gamma-1){w_*}^{\frac{\gamma-1}{\gamma}}}\delta_2\rho_\infty^{\frac{\gamma-1}{\gamma}},\\
\abs{\rho(x, t)-\rho(s(t), t)}&\leq \frac{\gamma^{\frac{1}{1-\gamma}}(\hat{\sigma}+{w_*}^{-\frac{1}{\gamma}}{w^*}^{\frac{\gamma-1}{\gamma}})}{\sqrt{\gamma}{w_*}^{\frac{\gamma-1}{\gamma}}\qnt{\sqrt{\gamma}{w_*}^{\frac{\gamma-1}{\gamma}}+\hat{\sigma}}^{\frac{\gamma-3}{\gamma-1}}}\delta_2\rho_\infty^{\frac{\gamma+1}{2\gamma}}.
\end{split}
\end{equation}

We next estimate the difference between $s'(t)$ and $\displaystyle u(s(t),t)$. Due to the R-H condition that 
\begin{equation}
(\rho(s(t),t)-\rho_\infty)s'(t)=\rho(s(t),t) u(s(t),t),
\end{equation}
we have 
\begin{equation}
\begin{aligned}
\abs{s'(t)-u(s(t), t)}=\abs{\frac{\rho_\infty}{\rho(s(t), t)-\rho_\infty}u(s(t), t)}=\abs{\frac{1}{{\rho(s(t),t)}/{\rho_\infty}-1}}\abs{u(s(t), t)}.
\end{aligned}
\end{equation}
Moreover, substituting $$\displaystyle \rho(s(t),t)\sim w'(t)^{\frac{2}{\gamma}}\rho_\infty^\frac{1}{\gamma}$$ given in \eqref{eq:EE4} and $$\abs{u_S(t) - w'(t)}\leq 2(w'(t)^{\frac{\gamma-2}{\gamma}}+\hat{\sigma})\rho_{\infty}^{\frac{\gamma-1}{\gamma}}$$ given in \eqref{eq:G2}, into the former equality, a direct computation yields that 
\begin{equation}
    \abs{s'(t)-u(s(t), t)} \sim w'(t)^{\frac{\gamma-2}{\gamma}}\rho_\infty^\frac{\gamma-1}{\gamma},
\end{equation}
which together with the boundedness of $w'(t)$, gives that there exists a positive constant $\epsilon_u$ such that if $\rho_\infty\in(0, \epsilon_u),$ it holds that 
\begin{equation}\label{eq:hH1}
\abs{s'(t)-u(s(t), t)}\leq (w'(t)+\frac{\hat{\sigma}}{3})\rho_\infty^{\frac{\gamma-1}{\gamma}}.
\end{equation}

Inserting \eqref{eq:hH1} into \eqref{eq:hH2}, we obtain that 
\begin{equation}\label{eq:EE10}
\begin{aligned}
&\abs{u(x, t)-s'(t)}\leq \qnt{w'(t)+\frac{\hat{\sigma}}{3}+\frac{4(\hat{\sigma}+{w_*}^{-\frac{1}{\gamma}}{w^*}^{\frac{\gamma-1}{\gamma}})}{\sqrt{\gamma}(\gamma-1){w_*}^{\frac{\gamma-1}{\gamma}}}\delta_2}\rho_\infty^{\frac{\gamma-1}{\gamma}},\\&
\abs{\rho(x, t)-\rho(s(t), t)}\leq \frac{\gamma^{\frac{1}{1-\gamma}}(\hat{\sigma}+{w_*}^{-\frac{1}{\gamma}}{w^*}^{\frac{\gamma-1}{\gamma}})}{\sqrt{\gamma}{w_*}^{\frac{\gamma-1}{\gamma}}\qnt{\sqrt{\gamma}{w_*}^{\frac{\gamma-1}{\gamma}}+\hat{\sigma}}^{\frac{\gamma-3}{\gamma-1}}}\delta_2\rho_\infty^{\frac{\gamma+1}{2\gamma}},
\end{aligned}
\end{equation}
for $\rho_\infty\in\qnt{0, \min\set{\mathcal{E}(\hat{\sigma}), \mathcal{E}(\frac{\sqrt{\gamma}{w_*}^{\frac{\gamma-1}{\gamma}}}{2}), \mathcal{W}(\hat{\sigma}), \mathcal{V}(\delta_2), \epsilon_u}}.$  Moreover, by taking  
$$
\delta_2=\hat{\delta}:=\min\set{\hat{\sigma}, \frac{\sqrt{\gamma}(\gamma-1){w_*}^{\frac{\gamma-1}{\gamma}}\hat{\sigma}}{12(\hat{\sigma}+{w_*}^{-\frac{1}{\gamma}}{w^*}^{\frac{\gamma-1}{\gamma}})},  \frac{\sqrt{\gamma}{w_*}^{\frac{\gamma-1}{\gamma}}\qnt{\sqrt{\gamma}{w_*}^{\frac{\gamma-1}{\gamma}}+\hat{\sigma}}^{\frac{\gamma-3}{\gamma-1}}}{2\gamma^{\frac{1}{1-\gamma}}(\hat{\sigma}+{w_*}^{-\frac{1}{\gamma}}{w^*}^{\frac{\gamma-1}{\gamma}})}},
$$
the inequalities in \eqref{eq:EE10} imply that when $\displaystyle\rho_\infty\in\qnt{0, \min\set{\mathcal{E}(\hat{\sigma}), \mathcal{E}(\frac{\sqrt{\gamma}{w_*}^{\frac{\gamma-1}{\gamma}}}{2}), \mathcal{W}(\hat{\sigma}), \mathcal{V}(\hat{\delta}), \epsilon_u}},$
\begin{equation}\label{eq:hH3}
    \abs{u(x, t)-s'(t)}\leq (w'(t)+\frac{2\hat{\sigma}}{3})\rho^{\frac{\gamma-1}{\gamma}}_\infty
\end{equation}
and 
\begin{equation}\label{eq:hH333}
    \abs{\rho(x, t)-\rho(s(t), t)}\leq \frac{1}{2}\rho_\infty^{\frac{\gamma+1}{2\gamma}},
\end{equation}
% \begin{equation}\label{eq:hH3}
% \begin{split}
% &\abs{u(x, t)-s'(t)}\leq (w'(t)+\frac{2\hat{\sigma}}{3})\rho^{\frac{\gamma-1}{\gamma}}_\infty,\\
% &\abs{\rho(x, t)-\rho(s(t), t)}\leq \frac{1}{2}\rho_\infty^{\frac{\gamma+1}{2\gamma}},
% \end{split}
% \end{equation}
which correspond precisely to the first two formulas in $(\tilde{\mathcal{H}})$. 

Finally, combining \eqref{eq:Fianal1},\eqref{eq:final2}, \eqref{eq:hH3} and \eqref{eq:hH333}, and taking 
\begin{equation*}
\delta_1=\hat{\sigma}, \delta_2=\hat{\delta}, \epsilon_A=\min\set{\mathcal{E}(\hat{\sigma}), \mathcal{E}(\frac{\sqrt{\gamma}{w_*}^{\frac{\gamma-1}{\gamma}}}{2}), \mathcal{W}(\hat{\sigma}), \mathcal{V}(\hat{\delta}), \epsilon_u},
\end{equation*}
we derive $(\tilde{\mathcal{H}})$ for $\displaystyle\rho_\infty\in(0, \epsilon_A)$. The proof is complete. 
\end{proof}

% We next figure out that local solution around the original point $(x, t)=(0, 0)$ satisfies $(\mathcal{H})$ with $\delta_1=\hat{\sigma},\delta_2=\hat{\delta}.$ 

% \begin{lem}\label{lem:local}
% There is positive constant $\epsilon_O$ such that for any given $\rho_\infty\in(0, \epsilon_O),$ there exists $t_*$ such that problem \eqref{eq:MainProblem} admits $C^1$ flow field $(\rho, u)$ and $C^2$ shock $s(t)$ in $\Omega_{t_*},$ which satisfies $(\mathcal{H})$ with  $\delta_1=\hat{\sigma},\delta_2=\hat{\delta}.$ 
% \end{lem}
% \begin{proof}
% This lemma is a direct corollary of \cite[Theorem 3.1.1]{MR823237}. We omit the proof here.
% \end{proof}

Now we are ready to prove the main Theorem \ref{thm1}.

\begin{proof}[Proof of Theorem \ref{thm1}]

We divide the proof into two steps: first, we demonstrate the local solvability of problem \eqref{eq:MainProblem}; second, we apply the a priori estimates in Lemma \ref{lem:Apriori} to extend the local solution to a global one.

\textbf{Step 1:} (Local solvability). Applying the local existence results of boundary value problems to hyperbolic system in \cite{MR823237}, we have that for any fixed $\displaystyle\rho_\infty\in \mathbb{R}^+,$ 
there exists $\mathrm{t}>0$ associated to $\rho_\infty$ such that problem \eqref{eq:MainProblem} admits $\displaystyle (\rho,u)\in C^1(\Omega_\mathrm{t})$ and $\displaystyle \|(\rho,u)\|_{C^1(\Omega_\mathrm{t})}$ is bounded. 

Set 
$$(\rho_O,u_O):=\lim_{\Omega_{\mathrm{t}}\ni(x,t)\to(0,0)}(\rho,u)(x,t), s_O:=\lim_{t\to0+}s'(t).$$
Due to the asymptotic behavior of the flow states in the constant velocity speed case in \eqref{thm:ConstantCase}, we have that there exists $\epsilon_O>0$ such that for $\displaystyle\rho_\infty\in(0,\epsilon_O)$, there holds that 
\begin{equation}
    \abs{s_O-u_O}\leq (u_O^\frac{\gamma-2}{\gamma}+\frac{\hat{\sigma}}{2})\rho_\infty^{\frac{\gamma-1}{\gamma}}.
\end{equation}
Therefore, noting that $\displaystyle \|(\rho,u)\|_{C^1(\Omega_\mathrm{t})}<+\infty,$ we conclude that for fixed $\rho_\infty\in(0,\epsilon_O),$ there exists $\displaystyle\mathrm{t}_*$ such that problem \eqref{eq:MainProblem} admits $\displaystyle (\rho,u)\in C^1(\Omega_{\mathrm{t}_*})$ satisfying $(\mathcal{H})$ with $\delta_1=\hat{\sigma}, \delta_2=\hat{\delta}$ determined in the proof of Lemma \ref{lem:Apriori}. 

\textbf{Step 2:} (Global solvability). Take $$\epsilon=\min\set{\epsilon_A, \epsilon_O}.$$ According to \eqref{eq:lowerboundedonH} and \eqref{eq:EE11}, $\displaystyle(\rho,u)\in C^1(\Omega_\mathrm{T})$ satisfying $(\tilde{\mathcal{H}})$ ensures  the uniform positive lower bound of $\displaystyle\abs{\lambda_+-\lambda_-}$ and the uniform upper bound of $\displaystyle\|(\rho,u)\|_{C^1(\Omega_\mathrm{T})},$ for any $\mathrm{T}\in\mathbb{R}^+.$ 

Thus, when $\displaystyle\rho_\infty\in (0,\epsilon)$, the a priori estimates Lemma \ref{lem:Apriori} ensures to extend the local $C^1$ flow field and local $C^2$ shock obtained in \textbf{Step 1} to $\Omega_{+\infty}$, and  $(\tilde{\mathcal{H}})$ holds all the time. The proof is complete. 
\end{proof}
 
 \medskip
 
 \section*{Acknowledgments}
 Qianfeng Li was partially supported by Sino-German (CSC-DAAD) Postdoc Scholarship Program, 2023 (No. 57678375). Yongqian Zhang was partially supported by NSFC Project 11421061 and by NSFC Project 12271507.

 \section*{Declarations}

\noindent\textbf{Conflict of interest} 
On behalf of all authors, the corresponding author states that there is no conflict of interest.

\noindent\textbf{Data Availability} The paper does not use any data set.

\bibliographystyle{plain}
\bibliography{CKWX20240603}

\end{document}